
\documentclass[11pt]{article}
\usepackage[T1]{fontenc}
\usepackage{latexsym,amssymb,amsmath,amsfonts,amsthm}
\usepackage{graphics}
\usepackage{graphicx}
\usepackage{mathrsfs}
\usepackage{subfigure}
\usepackage{amsmath}
\usepackage{algorithm}
\usepackage{algorithmic}
\usepackage[numbers,sort&compress]{natbib}
\usepackage{color}
\topmargin =0mm \headheight=0mm \headsep=0mm
\textheight =220mm \textwidth =160mm
\oddsidemargin=0mm\evensidemargin =0mm
\sloppy \brokenpenalty=10000

\newcommand{\g}{\gamma}
\newcommand{\G}{\Gamma}
\newcommand{\eps}{\epsilon}
\newcommand{\vep}{\varepsilon}
\newcommand{\Om}{\Omega}
\newcommand{\om}{\omega}

\newcommand{\ka}{\kappa}
\newcommand{\al}{\alpha}
\newcommand{\la}{\lambda}

\newcommand{\wi}{\widetilde}

\newcommand{\Int}{\int\limits}

\newcommand{\pa}{\partial}

\newcommand{\ov}{\overline}

\newcommand{\dive}{{\rm div\,}}

\newcommand{\ti}{\times}
\newcommand{\wid}{\widetilde}
\newcommand{\de}{\delta}

\newcommand{\na}{\nabla}
\newcommand{\mat}{\mathbb}
\newcommand{\se}{\setminus}
\newcommand{\ify}{\infty}

\newcommand{\tr}{\triangle}

\newcommand{\R}{{\mat R}}

\newcommand{\C}{{\mat C}}

\newcommand{\no}{\nonumber}
\newcommand{\be}{\begin{eqnarray}}
\newcommand{\ben}{\begin{eqnarray*}}
\newcommand{\en}{\end{eqnarray}}
\newcommand{\enn}{\end{eqnarray*}}
\newtheorem{theorem}{Theorem}[section]
\newtheorem{lemma}[theorem]{Lemma}

\newtheorem{remark}[theorem]{Remark}

\makeatletter
\newenvironment{tablehere}
  {\def\@captype{table}}
  {}
\newenvironment{figurehere}
  {\def\@captype{figure}}
  {}
\makeatother

\begin{document}
\renewcommand{\theequation}{\arabic{section}.\arabic{equation}}

\begin{titlepage}
\title{\bf 
A non-iterative sampling method for inverse elastic wave scattering by rough surfaces}
\author{
Tielei Zhu\thanks{School of Mathematics and Statistics, Xi'an Jiaotong University, Xi'an, Shaanxi, 710049, China.
(\tt zhutielei@stu.xjtu.edu.cn)}\;\;,
~~Jiaqing Yang\thanks{School of Mathematics and Statistics, Xi'an Jiaotong University, Xi'an, Shaanxi, 710049, China.({\tt jiaq.yang@xjtu.edu.cn})}
}

\date{}
\end{titlepage}
\maketitle
\begin{abstract}

Consider the two-dimensional inverse elastic wave scattering by an infinite rough surface with a Dirichlet boundary condition. A non-interative sampling technique is proposed for detecting 
the rough surface by taking elastic wave measurements on a bounded line segment above the surface, based on reconstructing a modified near-field equation associated with a special surface, 
which generalized our pervious work for the Helmholtz equation  (SIAM J. IMAGING. SCI.  10(3)(2017), 1579-1602)  to the Navier equation.
Several numerical examples are carried out to illustrate the effectiveness of the inversion algorithm.

\end{abstract}

{\bf Keywords:} Inverse scattering,  elastic wave, rough surface, sampling method.

\section{Introduction}\label{sec1}
\setcounter{equation}{0}

This paper is concerned with the two-dimensional inverse elastic wave scattering by a non-locally rough surface which is assumed to lie in an unbounded strip domain with a finite height in $x_2$-direction.
This kind of problems have received much attention in both engineering and mathematics due to  their wide range of applications in many fields such as geophysics, nondestructive testing and seismology. Precisely, the elastic
surface is assumed to be rigid which means that the elastic wave field satisfies a Dirichlet boundary condition on the surface. Moreover, a suitable radiation condition is needed to described
the asymptotic behavior of the scattered field away from the surface. Then the forward problem is to determine the distribution of the associated scattered field, provided an incident field and the elastic 
surface are given. Existence of a unique solution has been shown in different function space settings for elastic scattering by rough surfaces. We refer the reader to \cite{T01a,T02,JG12,JG15} for a detailed investigation.

In this paper, we are focused on the study of numerical solution of an inverse elastic scattering problem associated with a non-locally rough surface. The purpose is to propose a simple imaging technique to detect the shape and location of the surface from the wave field measurements above the surface. Many inversion algorithms have been proposed in the literature if the rough surface is scattered by an incident acoustic field. For instance, a nonlinear integral equation method was proposed in \cite{Li2015A} for recovering an impenetrable rough surface with a Dirichlet boundary condition, and a direct sampling method was proposed in \cite{liu2018a} for imaging an impenetrable surface or an interface in dielectric media by taking near-field Cauchy data.
If the rough surface is assumed to be a small and smooth deformation of a plane, a transformed field expansion method was proposed in \cite{GP13} for recovering the surface with a Dirichlet, impedance or transmission condition, and a factorization method was proposed in \cite{lechleiter2008factorization} for recovering a Dirichlet rough surface under the assumption $\kappa f_+<\sqrt{2}$, where $\kappa>0$ stands for the wavenumber
and $f_+$ stands for the height of the rough surface. Meanwhile, it is noticed that a time-domain point source method and a probe method were also proposed in \cite{lines2005time,Burkard2017A}, respectively, for recovering a rough surface with a Dirichlet boundary condition. If the rough surface is
considered to be a local perturbation of a planar surface, a Newton iterative method was introduced in \cite{Zhang2013A} for detecting an impenetrable surface, and the Kirsch-Kress method was extended in \cite{li2017kirsch} to recover a penetrable interface by
near-filed measurements above and below the surface. We also refer to \cite{MJKJ17} for an application of the so-called linear sampling method for imaging a Dirichlet rough surface or a penetrable interface. However, 
we remark that it is not trivial to extend the above methods to numerically solve the inverse elastic scattering problems associated with an infinite rough surface, due to the coupling of the compressional and shear waves which brings new 
challenges in both mathematics and numerics. A recent attempt in this direction is due to Liu {\em et. al} \cite{XBH19,hu2019a} in which a direct sampling method was proposed for imaging an elastic surface by using plane waves or point sources, extending their previous works for the Helmholtz equations \cite{liu2018a}. 

In this paper, we aim to study the linear sampling method (LSM) as analytic as a tool to numerically reconstruct an infinite elastic surface. It is well-known that the LSM is an efficient imaging technique and has been extensively studied in inverse scattering problems by bounded obstacles, for example, \cite{cakoni2011linear}, since the reduced algorithm is fast and does not need any a priori knowledge on the obstacles. 
However, it is challenging to present a strict theoretical analysis on the LSM for recovering an infinite surface when compared to the bounded obstacle case. We refer the reader to a recent work \cite{MJKJ17} on a modified version of the LSM for recovering locally rough surfaces or interfaces by incident acoustic waves. Partially motivated by \cite{MJKJ17}, we reformulate the elastic scattering problem into an equivalently boundary value problem (BVP) with compactly supported boundary data, by introducing a class of specially rough surfaces. The well-posedness of the BVP directly follows from the classical Riesz-Fredholm alternative. A modified near-field equation is then proposed to numerically reconstruct the elastic surface, where the surface can be reconstructed more fully as the auxiliary parameter $R>0$ is chosen 
to be sufficiently large in the modified near-field equation.

The reminder of this paper is organized as follows. In section 2, we propose the mathematical formulation for the elastic surface scattering by point sources, which is reduced to 
a BVP with compactly support boundary data by introducing a Dirichlet-Green function for a special rough surface. Some necessary properties
on the solution of the BVP are then shown. In section 3, a rigorous theoretical analysis is provided for recovering a non-locally rough surface by the extended LSM based on a modified near-field equation.
In section 4, we carry out several numerical examples to demonstrate the effectiveness of the reduced algorithm, and present a brief conclusion in Section 5.



\section{Mathematical formulation}\label{sec2}
\setcounter{equation}{0}

In this section, we shall present the formulation of the model problem for the two-dimensional elastic wave scattering by an unbounded rough surface with an incident point source; see Figure \ref{fig:label1} for a geometric setting.
\begin{figure}[ht]
\centering
\includegraphics[scale=0.25]{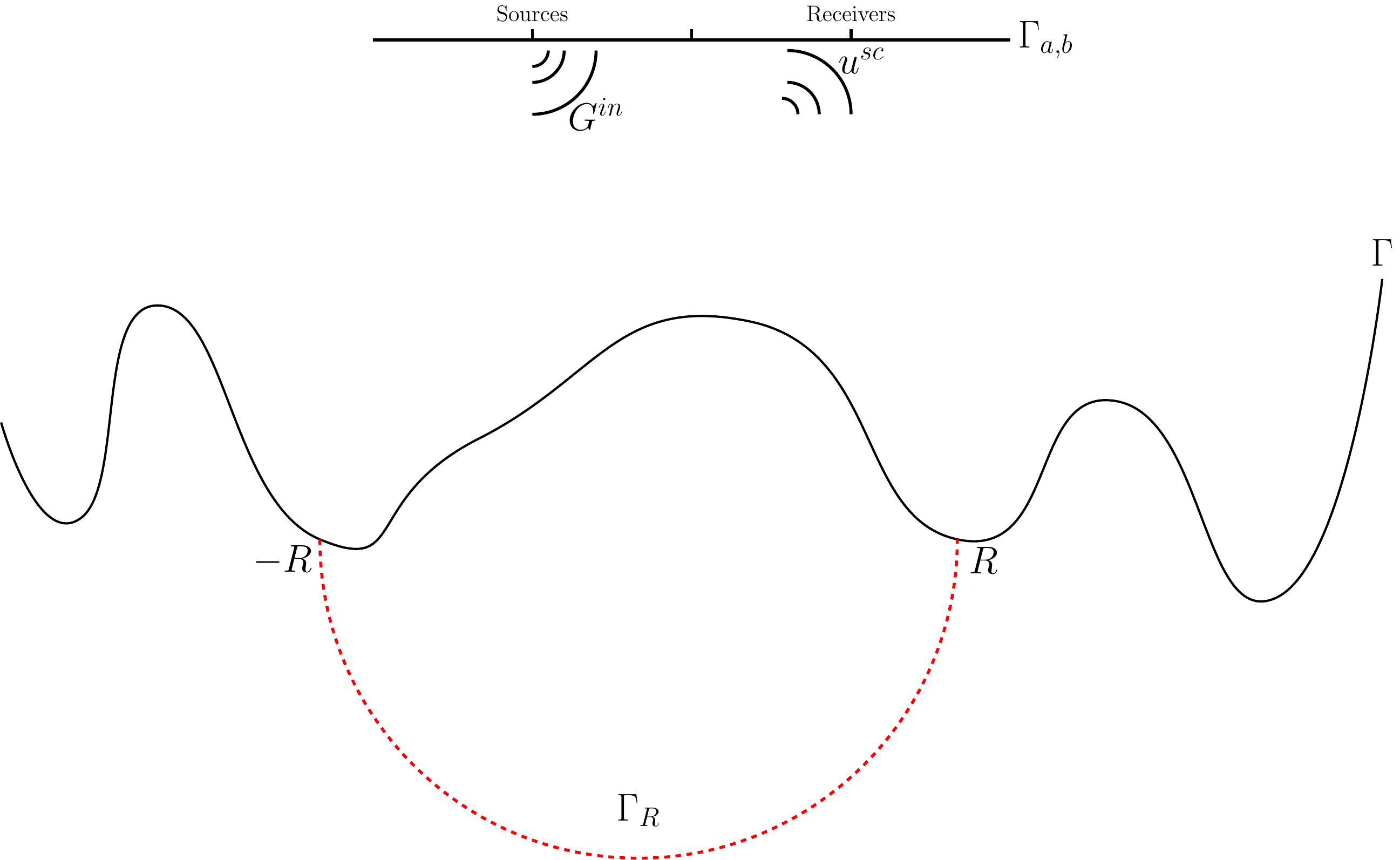}
\caption{Elastic wave scattering by unbounded rough surfaces}
\label{fig:label1}
\end{figure}
More precisely, let the surface be denoted by $\Gamma:=\{(x_1,x_2)\in\R^2:x_2=f(x_1),\;x_1\in\R\}$, where $f$ is assumed to be a Lipschitz continuous function satisfying that there exists two constants $f_-$ and $f_+$ such that $f_-<f(x)<f_+$ for all $x\in\Gamma$. Define the unbounded domain above $\G$ in $\R^2$ by $\Omega:=\{x=(x_1,x_2)\in \R^2:x_2>f(x_1),\;x_1\in\R\}$ which is
filled with isotropic homogeneous medium described by lame coefficients $\la$, $\mu$ with $\mu>0$, $\la+\mu>0$.
For $s\in \R$, we define the domain $U_s:=\{(x_1,x_2)\in\R^2:x_2>s \}$ and $\Gamma_s:=\{(x_1,x_2)\in\R^2:x_2=s\}$. Then it holds $U_{f_+}\subset \Om\subset U_{f_-}$. 
Without loss of generality, we assume throughout the paper  $f_-<0<f_+$ and set $S_h:=\Om\se\ov{U}_h$ for $h>f_+$.


Consider the incident wave $G^{in}(x,y,p)$ induced by an elastic point source in the form 
\be\label{2.1a}
G^{in}(x,y,p): = \Gamma(x,y)p,\qquad x,y\in\R^2,\;x\neq y,
\en
where $p\in\C^2$ is the polarization vector and $\Gamma(x,y)$ is given by
\ben
\G(x,y):=\frac{\rm i}{4\mu} H_0^{(1)}(k_s|x-y|){\bf I}+
\frac{\rm i}{4\om^2}\na_{x}^{\perp}\na_{x}(H_0^{(1)}(k_s|x-y|)-H_0^{(1)}(k_p|x-y|))
\enn
which corresponds to the fundamental solution to the Navier equation in $\R^2$. Here, 
$H^{(1)}_0(\cdot)$ is the Hankel function of first kind of order zero, and 
\ben
 k_s:=\frac{\om}{\sqrt{\mu}},\qquad k_p:=\frac{\om}{\sqrt{2\mu+\la}}
\enn
for the frequency $\omega\in\R$.

If $\Gamma$ is considered to be a rigid surface in elasticity, then the scattering of $G^{in}$ by $\Gamma$ can be formulated as finding a scattered field $u^{sc}$ so that $u^{sc}$ satisfies the Navier equation
\be\label{2.2a}
&&\tr^* u^{sc}+\om^2 u^{sc} = 0 \quad {\rm in\;}\Om,\qquad \tr^*:=\mu\tr+(\la+\mu){\rm grad\,}\dive,\\\label{2.2}
&&u^{sc}(x) = -G^{in}(x,y,p)\quad {\rm on\;}\Gamma,
\en
and an appropriate radiation condition. To formulate the radiation condition for the model, we introduce a modified incident wave
\be\label{2.3a}
\wi{G}^{in}(x,y,p)=(1-\phi_y(x) )G^{in}(x,y,p)
\en
where $\phi_y(x)\in C^{\infty}(\Om)$ is defined by $\phi_y(x)=1$ for $|x-y|\leq \frac{1}{2}\vep$ and $\phi_y(x)=0$ for $|x-y|\geq\vep$ with $\vep<dist(y,\G)$.

Notice that the scattering problem (\ref{2.1a})-(\ref{2.3a}) can be viewed as a boundary value problem with the boundary data on $\Gamma$ induced by (\ref{2.1a}). This means that 
the scattered solution $u^{sc}$  to (\ref{2.1a})-(\ref{2.3a}) is not changed if the modified incident wave (\ref{2.3a}) is used instead of (\ref{2.1a}). Then we can define the modified total field
\be\label{2.4a}
\wi{ u}(x,y,p)=u^{sc}(x,y,p)+\wi{G}^{in}(x,y,p)\qquad {\rm in\;}\R^2.
\en
It is easily checked that $\wi{ u}$ satisfies the following problem:
\be\label{2.6a}
&&\tr^* \wi{u}+\om^2 \wi{u}= g_y \quad {\rm in\;}\Om,\\\label{2.5a}
&&\qquad\qquad\;\wi{u} = 0 \quad\ {\rm on\ }\G,
\en
where $g_y(x):=(\tr^*_x+\om^2 )\wi{G}^{in}(x,y,p)\in L^2(\R^2)$. Moreover, it follows from \cite{T01a,JG12} that $\wi{u}$
satisfies the so-called upward propagating radiation condition (UPRC)
\be\label{2.7a}
\wi{u}(x)=\frac{1}{\sqrt{2\pi}}\int_{\R}
\left(e^{{\rm i}\g_p(\xi)(x_2-h)}M_p(\xi)+e^{{\rm i}\g_s(\xi)(x_2-h)}M_s(\xi)\right)\hat{\wi{u}}_h(\xi)e^{ix_1\xi}{\,\rm d}\xi
\en
for $x_2>h$ with $supp\, g\subset S_h$ and $h>f_+$, where $M_p$ and $M_s$ are given by

%
%
%
\ben
M_p(\xi)=\frac{1}{\xi^2+\g_p\g_s}
\begin{pmatrix}
  \xi^2 &\xi\g_s\\
  \xi\g_p &\g_p\g_s
\end{pmatrix}
,\quad
M_s(\xi)=\frac{1}{\xi^2+\g_p\g_s}
\begin{pmatrix}
   \g_p\g_s &-\xi\g_s\\
  -\xi\g_p & \xi^2
\end{pmatrix},
\enn
respectively, with
\ben
\g_s(\xi)=\sqrt{k_s^2-\xi^2},\qquad \g_p(\xi)=\sqrt{k_p^2-\xi^2},
\enn 
and $\hat{\wi{u}}_h$ the Fourier transform of $\wi{u}_h:=\wi{u}|_{\G_h}$.

For $H>f_+$, let $V_H$ as the closure of $ [C^{\ify}_0(S_H\bigcup\G_H)]^2$ under the norm
\ben
\|u\|_{V_H}=\left(\|\na u\|^2_{[L^2(S_H)]^{2\ti 2}} +\|u\|^2_{[L^2(S_H)]^2}\right)^{\frac{1}{2}}.
\enn
Then Problem (\ref{2.3a})-(\ref{2.7a}) can be reformulated into the following boundary value problem:{\em 
find $\wi{u}\in [H^1_{loc}(\Om)]^2$ such that  $\wi{u}|_{S_H}\in V_H$ for every  $H>h$ which satisfies (\ref{2.6a}) in a distributional sense and (\ref{2.7a}) with $\wi{u}_h\in [H^{\frac{1}{2}}(\G_h)]^2$.}



Next, we briefly recall the well-posedness to Problem (\ref{2.3a})-(\ref{2.7a}) by a variational method (cf.\cite{JG12}). For $\wid{\mu}\in\R$ and $\wid{\lambda}\in\R$, 
we first introduce the first generalized Betti formula
\ben
-\int_{D}(\tr^*+\om^2)w\cdot  v {\,\rm d}x
=\int_{D}( \mathcal{E}_{\wid{\mu},\wid{\la}}(w,v)-\om^2w\cdot v){\,\rm d}x - \int_{\pa D} v\cdot T_{\wid{\mu},\wid{\la}}w {\,\rm d}s
\enn
and
the third generalized Betti formula
\ben
\int_{D}\tr^*w\cdot v-\tr^*v\cdot w {\,\rm d}x
= \int_{\pa D}(v\cdot T_{\wid{\mu},\wid{\la}}w -w\cdot T_{\wid{\mu},\wid{\la}}v ) {\,\rm d}s
\enn
associated with the differential operator $\tr^*$ for $w,v\in [H^2(D)]^2$, where
\ben
\mathcal{E}_{\wid{\mu},\wid{\la}}(w,v)&:=&(\la+2\mu)(\pa_1w_1\pa_1v_1+\pa_2w_2\pa_2v_2)+\mu(\pa_2w_1\pa_2v_1+\pa_1w_2\pa_1v_2)\\
 & & +\wid{\la}(\pa_1w_1\pa_1v_1+\pa_2w_2\pa_2v_2)+\wid{\mu}\pa_2w_1\pa_1v_2+\pa_1w_2\pa_2v_1),\\
T_{\wid{\mu},\wid{\la}}w&:=&(\mu+\wid{\mu})\pa_{\bf n}w+\wid{\la}{\bf n}\dive w+\wid{\mu}
\begin{pmatrix}
  n_2(\pa_1w_2-\pa_2w_1)\\
  n_1(\pa_2w_1-\pa_1w_2)
\end{pmatrix}
\enn
with the unit exterior normal ${\bf n}: = (n_1,n_2)\in \mat{S}^1$ on $\partial D$, 
 and $D$ denotes a domain in which the divergence theorem is assumed to hold. For convenience, we have used the notation $\partial_j$ to indicate $\partial_j: = \partial/\partial_{x_j}$
 for $j=1,2$. 
 
Define 
\be\label{2.8a}
\begin{aligned}
\mathcal{E}(w,v): &=& \mathcal{E}_{0,\la+\mu}(w,v),\\
Tw: &=& T_{0,\la+\mu}w.
\end{aligned}
\en
In order to truncate Problem (\ref{2.3a})-(\ref{2.7a}) into a bounded strip in $x_2$-direction, we introduce the Dirichlet-to-Neumann map $\mathcal{T}$ (cf.\cite{JG12}) on $\G_h$ by 
\be\label{2.9a}
\mathcal{T}v:=\mathcal{F}^{-1}(M(\xi)\mathcal{F}v),\qquad {\rm for\;}v\in [C^{\ify}_0(\G_h)]^2,
\en
where 
the matrix $M(\xi)$ is given by
\ben
M(\xi)=\frac{{\rm i} }{\xi^2+\g_p\g_s}
\begin{pmatrix}
  \om^2\g_p & -\xi\om^2+\xi\mu(\xi^2+\g_p\g_s)\\
  \xi\om^2-\xi\mu(\xi^2+\g_p\g_s) & \om^2\g_s
\end{pmatrix},
\enn
and $\mathcal{F}$ and $\mathcal{F}^{-1}$ denote the Fourier transform and the inverse Fourier transform, respectively.
It follows from \cite{JG12} that $\mathcal{T}$ can extended to a linear bounded map from $[{H}^{\frac{1}{2}}(\G_h)]^2$ to $[{H}^{-\frac{1}{2}}(\G_h)]^2$.

Using the Betti formulas, Problem (\ref{2.3a})-(\ref{2.7a}) can be reformulated in the variational form as finding 
$\wi{u}\in V_h$ such that
\be\label{2.10a}
\begin{aligned}
B(\wi{u},v): &= \int_{S_h}(\mathcal{E}(\wi{u},\ov{v})-\om^2u\cdot\ov{v} ){\,\rm d}x-\Int_{\G_h}\g_-\ov{v}\cdot \mathcal{T}\g_-\wi{u} {\,\rm d}s\\
            &= \int_{S_h}\ov{v}\cdot g_y {\,\rm d}x \\
            &= :F(v)\qquad\qquad {\rm for\;all\;}v\in V_h,
\end{aligned}
\en
where
$\g_-$ denotes the trace restricted to $\G_h$. By the Risze representation, the sesquilinear $B(\cdot,\cdot)$ induces a bounded linear operator 
$\mathcal{B}(\om): V_h\to V_h^*$ for a fixed $\om>0$ such that
\ben
B(w,v)=(\mathcal{B}(\om)w,v)\qquad {\rm for\;all\;}w,v\in V_h.
\enn

\begin{theorem}\label{thm2.1} There exists a unique solution to Problem (\ref{2.6a})-({\ref{2.7a}}) for any given $g_y\in L^2(\R^2)$.
\end{theorem}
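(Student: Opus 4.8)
The plan is to recast Problem (\ref{2.6a})--(\ref{2.7a}) as the operator equation $\mathcal{B}(\om)\wi u=F$ in $V_h$ and to show that $\mathcal{B}(\om):V_h\to V_h^*$ is an isomorphism; since $F(v)=\int_{S_h}\ov v\cdot g_y\,dx$ satisfies $|F(v)|\le\|g_y\|_{[L^2(\R^2)]^2}\|v\|_{V_h}$, it is a bounded antilinear functional on $V_h$ for every $g_y\in[L^2(\R^2)]^2$, so invertibility of $\mathcal{B}(\om)$ is exactly what is needed. That the variational formulation (\ref{2.10a}) is equivalent to the BVP follows from the two Betti formulas and the definition (\ref{2.9a}) of $\mathcal{T}$, the UPRC (\ref{2.7a}) being encoded by the transparent condition $T\wi u=\mathcal{T}(\g_-\wi u)$ on $\G_h$. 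Concretely I would establish three ingredients: boundedness of $B$ together with a Gårding inequality, uniqueness for the homogeneous problem, and an a priori bound compensating for the fact that $S_h$ is unbounded in the $x_1$-direction.

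First I would check boundedness and the Gårding inequality. Boundedness of $B$ follows from the continuity of $\mathcal{E}$ on $V_h\times V_h$ and of $\mathcal{T}:[H^{1/2}(\G_h)]^2\to[H^{-1/2}(\G_h)]^2$ precomposed with the trace $\g_-$. For the lower bound, the special choice of Betti parameters in (\ref{2.8a}) makes $\mathcal{E}(w,\ov w)$ a positive combination of the squares of all first derivatives, so $\int_{S_h}\mathcal{E}(w,\ov w)\,dx\ge c\|\na w\|_{[L^2(S_h)]^{2\times2}}^2$ directly, without appealing to Korn's inequality; adding $\|w\|_{[L^2(S_h)]^2}^2$ controls $\|w\|_{V_h}^2$. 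The term $-\om^2\int_{S_h}|w|^2\,dx$ is of lower order. The delicate contribution is the boundary term $-\int_{\G_h}\g_-\ov w\cdot\mathcal{T}\g_- w\,ds$, which I would analyze through the symbol $M(\xi)$: for $|\xi|>k_s$ both $\g_p(\xi)$ and $\g_s(\xi)$ are purely imaginary, $M(\xi)$ is Hermitian, and a direct computation (using $\mu>0$ and $\la+\mu>0$) shows that $-M(\xi)$ is positive definite and grows like $|\xi|$. Hence this term has nonnegative real part up to a remainder of order $\le0$, which is bounded on the trace space and, after a trace inequality with small constant, absorbed into the leading terms. Collecting these estimates yields
\ben
&&\Rt\,B(w,w)\ \ge\ c\,\|w\|_{V_h}^2-C\,\|w\|_{[L^2(S_h)]^2}^2,\qquad w\in V_h,
\enn
i.e. a Gårding inequality.

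Next I would prove uniqueness. Taking $g_y=0$ and $v=\wi u$ in (\ref{2.10a}), the volume integral $\int_{S_h}(\mathcal{E}(\wi u,\ov{\wi u})-\om^2|\wi u|^2)\,dx$ is real, so $\I\int_{\G_h}\ov{\wi u}\cdot\mathcal{T}\wi u\,ds=0$. By Plancherel and the explicit form of $M(\xi)$, this quantity is a nonnegative multiple of the energy flux carried through $\G_h$ by the propagating ($|\xi|<k_s$) compressional and shear modes, so its vanishing forces these radiating modes to be zero. Through the UPRC (\ref{2.7a}) the field in $U_h$ then consists solely of evanescent modes and decays exponentially as $x_2\to\infty$. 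A Rellich-type identity (integrating the homogeneous equation against $\wi u$ over $U_h$ and using the decay), or equivalently unique continuation for $\tr^*+\om^2$, forces $\wi u\equiv0$ in $U_h$; propagating this downward across $\G_h$ into $S_h$ by the homogeneous equation together with the Dirichlet condition (\ref{2.5a}) on $\G$ then gives $\wi u\equiv0$ in $\Om$.

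Finally, because $S_h$ is unbounded in $x_1$, the embedding $V_h\hookrightarrow[L^2(S_h)]^2$ is not compact, so the Gårding inequality and uniqueness do not by themselves yield solvability via the classical Fredholm alternative. To close the argument I would establish, following \cite{JG12}, the a priori estimate $\|\wi u\|_{V_h}\le C\|g_y\|_{[L^2(S_h)]^2}$ for any solution: testing the variational equation with a Rellich multiplier (a suitable $x_2$-derivative/weighted test field) and combining the resulting identity with the Gårding inequality and the uniqueness above produces the bound, which in turn furnishes an inf-sup condition for $B$ and hence the invertibility of $\mathcal{B}(\om)$. I expect this a priori estimate to be the main obstacle, precisely because the unbounded geometry blocks a soft compactness argument and because the coupling of compressional and shear parts in $\tr^*$ must be handled carefully in the Rellich identity. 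A secondary delicate point lies in the uniqueness step, namely upgrading the vanishing of the propagating modes to the vanishing of the whole field: Rellich's lemma, available for bounded obstacles, has to be replaced here by the UPRC together with unique continuation for the Navier system.
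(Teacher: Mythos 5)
Your proposal is correct and takes essentially the same route as the paper: the paper's entire proof consists of reducing the BVP to the variational problem (\ref{2.10a}) and invoking the isomorphism of $\mathcal{B}(\om):V_h\to V_h^*$ established in \cite{JG12}, which is exactly the operator-equation framework you set up. The details you sketch --- the Korn-free G{\aa}rding inequality enabled by the choice (\ref{2.8a}), the flux/uniqueness argument, and the Rellich-multiplier a priori estimate furnishing an inf-sup condition in place of the unavailable Fredholm alternative --- are precisely the content of the cited reference rather than of the paper itself.
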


\begin{proof}
The assertion follows directly from the isomorphism of the operator $\mathcal{B}(\om)$ from $V_h$ to $V_h^*$ for any $\om>0$. We refer to \cite{JG12} for a detailed discussion.
\end{proof}

In the following, we will introduce an auxiliary boundary value problem which is related to a special rough surface 
\be\label{2.12aa}
\wid{\G}_R: = \pa\Om_R,
\en
where $\Om_R:=\Omega \cup B_R$ is a perturbed domain of $\Om$ and $B_R:=\{x\in \R^2:|x|<R \}$  with $R>0$ sufficiently large so that $\Gamma\cap B_R$ is a connected open segment of $\Gamma$. It is easily observed that $\wid{\G}_R$ is just a local perturbation of the rough surface $\Gamma$. That is, $\wid{\G}_R$ is different from $\Gamma$ on a finite interval $[-R,R]$.

Consider the scattering of the same point source $G^{in}(\cdot,y,p)$ by the perturbed surface $\wid{\G}_R$. Then one has the following problem
\be\label{2.11a}
\begin{aligned}
  &\tr^*G^{sc}(\cdot,y,p;R)+\om^2G^{sc}(\cdot,y,p;R)=0\qquad  &{\rm in}\ \Om_R,\\
  &G^{sc}(\cdot,y,p;R)+G^{in}(\cdot,y,p)=0\qquad & {\rm on} \ \wid{\G}_R,\\
 &  \wi{G}:=G^{sc}+\wi{G}^{in}  {\ \rm satisfies \ the\ UPRC}.  &
\end{aligned}
\en
It follows from Theorem \ref{thm2.1} that Problem (\ref{2.11a}) is uniquely solvable for any given incident wave $G^{in}(\cdot,y,p)$ with $y\in\Om$ and $p\in\C^2$.

Let $u(x,y,p):=u^{sc}(x,y,p)+G^{in}(x,y,p)$ and $G(x,y,p;R):=G^{sc}(x,y,p;R)+G^{in}(x,y,p)$ denote the total fields for the rough surfaces $\Gamma$ and $\wid{\G}_R$, respectively.
Define the difference between $u^{sc}(x,y,p)$ and $G^{sc}(x,y,p;R)$ by $v(\cdot,y,p;R):=u(\cdot,y,p)-G(\cdot,y,p;R)$ in $\Omega$. It is checked that $v(\cdot,y,p;R)$ satisfies the Navier equation 
\be\label{2.12a}
(\tr^*+\om^2)v(\cdot,y,p;R)=0\qquad {\rm in }\ \Om
\en
and the UPRC.  Moreover, it can be also found by the boundary conditions for $u$ and $G$ that 
$v$ has compactly support on $\G$, i.e.,
\be\label{2.13a}
v(x,y,p;R)=0\qquad {\rm for }\ x\in \G \ {\rm with}\ |x_1|>R.
\en
Based on the above observations, we introduce the following boundary value problem
\be\label{2.14a}
\begin{aligned}
  &(\tr^*+\om^2)v =0\qquad  &{\rm in}\ \Om,\\
  &v=\varphi\qquad & {\rm on} \ \G,\\
 & v {\ \rm satisfies \ the\ UPRC}, &
\end{aligned}
\en
where $\varphi$ is the boundary data with compact support on $\G$. Notice that it is possible to recover the surface segment $\G\se\wid{\G}_R$ by knowledge of the difference $v(\cdot,y,p;R)$
for the inverse problem. Next we shall obtain some important properties on the solution of Problem (\ref{2.14a}) which is necessary to justify the linear sampling method in the unbounded case.

To analyse Problem (\ref{2.14a}), we first introduce some useful notations and function spaces, which will be frequently used later. For $R>0$, definite two subsets $\G^{(0)}_R$ and  $\G^{(1)}_R$
of $\G$ by
\ben
\G_{R}^{(0)}: = \{x\in\G: |x_1|\geq R\}\qquad{\rm and}\qquad
\G_{R}^{(1)}: = \G\se \G_{R}^{(0)},
\enn
respectively. For the index $s\in\R$, define the function spaces
\be\label{2.15a}
[H^s(\G_{R}^{(1)})]^2:&=&\{\psi|_{\G_{R}^{(1)}}:  \psi\in [H^s(\G)]^2 \}\quad
    \text{for\;\;}|s|\leq1,\\ \label{2.16a}
[\wid{H}^s(\G_{R}^{(1)})]^2:&=&\{\psi|_{\G_{R}^{(1)}}:
    \psi\in [H^s(\G)]^2,\;\;{\rm supp}(\psi)\subset\ov{\G_{R}^{(1)}}\}\quad \text{for\;\;}|s|\leq1.
\en
It is known that $\langle H^s(\G_{R}^{(1)}),\wid{H}^{-s}(\G_{R}^{(1)})\rangle$ is a pair of dual product for $|s|\leq1$.
Using the Green's theorem again, Problem (\ref{2.14a}) can be rewritten in the variational form of finding $v\in [H^1(S_h)]^2$ with $v|_{\Gamma}=\varphi\in [\wid{H}^{\frac{1}{2}}(\G_{R}^{(1)})]^2$ such that 
\be\label{2.17a}
B(v,\xi)=0,\qquad {\rm for\;all\;\;}\xi\in V_h.
\en
By the property of $\varphi$, we now construct one associated function $v_0\in [H^1(D_{R,h})]^2$ to satisfy the Dirichlet problem 
\be\label{2.18a}
\begin{aligned}
\tr^*v_0=0\quad  {\rm in}\ D_{R,h},\qquad v_0=\wid{\varphi} \quad {\rm on}\ \pa D_{R,h},
\end{aligned}
\en
where $D_{R,h}$ is chosen to be a bounded Lipschitz domain which is contained in $S_h$ with $\ov{\G_R^{(1)}}=\pa D_{R,h} \cap \G$, and
$\wid{\varphi}$ is the zero extension of $\varphi$ from $\G_R^{(1)}$ to $\pa D_{R,h}$. It is known that such $v_0$ is well-defined by the well-posedness of Problem (\ref{2.18a})
with the estimate 
\be\label{2.19a}
\|v_0\|_{H^1(D_{R,h})} \leq C\| \wid{\varphi} \|_{[H^{1/2}(\partial D_{R,h})]^2} = C\|\varphi\|_{[\wid{H}^{1/2}(\G_{R}^{(1)})]^2}.
\en

Define $\wid{v}: = v-v_0$. It is found that Problem (\ref{2.14a}) can be reduced to find $\wid{v}\in V_h$ satisfying 
\be\label{2.20a}
B(\wid{v},\xi)=-B(v_0,\xi)\qquad {\rm for\;all\;\;}\xi\in V_h.
\en
By combining Theorem \ref{thm2.1} with (\ref{2.20a}), we have the following well-posedness on Problem (\ref{2.14a}).
\begin{theorem}\label{thm2.2}
For $\varphi\in [\wid{H}^{1/2}(\G_{R}^{(1)})]^2$, there exists a unique solution $v\in [H^1(S_h)]^2$ with $v|_{\Gamma}=\varphi\in [\wid{H}^{\frac{1}{2}}(\G_{R}^{(1)})]^2$ to
Problem (\ref{2.14a}) such that 
\ben
\|v\|_{[H^1(S_h)]^2}\leq c\|\varphi\|_{[\wid{H}^{1/2}(\G_{R}^{(1)})]^2}.
\enn
\end{theorem}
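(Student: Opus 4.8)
The plan is to reduce Problem (\ref{2.14a}) to the homogeneous-boundary problem governed by the isomorphism $\mathcal{B}(\om):V_h\to V_h^*$ of Theorem \ref{thm2.1}, by lifting the inhomogeneous Dirichlet datum $\varphi$ through the auxiliary function $v_0$ already introduced in (\ref{2.18a}). First I would extend $v_0\in [H^1(D_{R,h})]^2$ by zero to the whole strip $S_h$, keeping the same notation. Because $\wid\varphi$ is the zero extension of $\varphi$ from $\G_R^{(1)}$ and hence $v_0$ vanishes on the interior portion $\pa D_{R,h}\se\G$, this zero extension carries no trace jump across $\pa D_{R,h}$; therefore the extended $v_0$ lies in $[H^1(S_h)]^2$, is compactly supported, satisfies $v_0|_\G=\varphi$ on $\G_R^{(1)}$, and obeys the bound (\ref{2.19a}). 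Choosing $D_{R,h}$ to stay strictly below $\G_h$, the trace $\g_- v_0$ on $\G_h$ vanishes, so the Dirichlet-to-Neumann contribution to $B(v_0,\cdot)$ drops out and only the volume part survives.

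With this lifting in hand, I would observe that seeking a solution of (\ref{2.14a}) in the variational form (\ref{2.17a}) is equivalent, after the substitution $\wid v:=v-v_0$, to finding $\wid v\in V_h$ solving (\ref{2.20a}), i.e. $\mathcal{B}(\om)\wid v=F_0$ in $V_h^*$ with $F_0:=-B(v_0,\cdot)$. The previous step shows $F_0$ is a bounded antilinear functional on $V_h$ with $\|F_0\|_{V_h^*}\le C\|v_0\|_{[H^1(S_h)]^2}\le C\|\varphi\|_{[\wid H^{1/2}(\G_R^{(1)})]^2}$. Since Theorem \ref{thm2.1} asserts that $\mathcal{B}(\om)$ is an isomorphism for every $\om>0$, there is a unique $\wid v\in V_h$ with $\|\wid v\|_{V_h}\le\|\mathcal{B}(\om)^{-1}\|\,\|F_0\|_{V_h^*}$. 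As the $V_h$-norm coincides with the $[H^1(S_h)]^2$-norm, setting $v:=\wid v+v_0$ produces a function in $[H^1(S_h)]^2$ with $v|_\G=\varphi$ that solves (\ref{2.17a}), and the triangle inequality together with (\ref{2.19a}) yields the claimed estimate $\|v\|_{[H^1(S_h)]^2}\le c\|\varphi\|_{[\wid H^{1/2}(\G_R^{(1)})]^2}$.

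For uniqueness I would subtract two solutions: their difference $w$ lies in $V_h$, since the inhomogeneous datum cancels, and satisfies $B(w,\xi)=0$ for all $\xi\in V_h$, i.e. $\mathcal{B}(\om)w=0$; injectivity of the isomorphism forces $w=0$. The genuinely hard analysis here — solvability on the unbounded strip and the handling of the upward propagating radiation condition — is entirely absorbed into Theorem \ref{thm2.1}, so the only real obstacle is the lifting step itself. The delicate point I would verify carefully is that the zero extension of $v_0$ is truly of class $H^1$ across $\pa D_{R,h}$, which rests on the compact support of $\varphi$ in $\G_R^{(1)}$ and on arranging $D_{R,h}$ so that the Dirichlet-to-Neumann term in $B$ does not act on $v_0$; these are precisely what make the right-hand functional bounded with the sharp $\wid H^{1/2}$-dependence required for the estimate.
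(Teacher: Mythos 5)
Your proposal is correct and follows essentially the same route as the paper: lift the Dirichlet datum via the auxiliary function $v_0$ of (\ref{2.18a}) (implicitly extended by zero to $S_h$), reduce to the variational problem (\ref{2.20a}) for $\wid{v}=v-v_0\in V_h$, and invoke the isomorphism of $\mathcal{B}(\om):V_h\to V_h^*$ underlying Theorem \ref{thm2.1} together with the estimate (\ref{2.19a}). Your explicit checks --- that the zero extension of $v_0$ is of class $H^1$ across $\pa D_{R,h}$ because $\wid{\varphi}$ vanishes off $\ov{\G_R^{(1)}}$, that the Dirichlet-to-Neumann term does not act on $v_0$, and the uniqueness argument via injectivity of $\mathcal{B}(\om)$ --- simply fill in details the paper leaves implicit.
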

By Theorem \ref{thm2.2}, we introduce the solution operator $L:[\wid{H}^{\frac{1}{2}}(\G_{R}^{(1)})]^2\to [L^2(\G_{a,b})]^2 $  associated with Problem (\ref{2.14a}), which is defined by
\be\label{2.21a}
L\varphi: = v|_{\G_{a,b}}
\en
where $\G_{a,b}: = \{x\in\R^2: |x_1|\leq a,\; x_2=b\}$ with $b>f_+$, and $v$ is the unique solution of Problem (\ref{2.14a}) with the boundary data $\varphi$. Clearly, $L$ is well defined.
Furthermore, we have the following lemma on $L$.

\begin{lemma}\label{le2.1}
$L$ is injective and has dense range in $[L^2(\G_{a,b})]^2$.
\end{lemma}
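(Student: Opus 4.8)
The plan is to establish the two claims separately, using the standard duality machinery for sampling methods. For injectivity, I would argue by contradiction: suppose $L\varphi_1 = L\varphi_2$ for some $\varphi_1,\varphi_2\in[\wid{H}^{1/2}(\G_R^{(1)})]^2$, and set $\varphi := \varphi_1-\varphi_2$ with associated solution $v$ of Problem (\ref{2.14a}). Then $v|_{\G_{a,b}}=0$. The key observation is that $v$ satisfies the Navier equation (\ref{2.12a}) in $\Om$ together with the UPRC, and vanishes on the segment $\G_{a,b}$ sitting at height $x_2=b>f_+$. I would first use the UPRC representation (\ref{2.7a}): for $x_2>h$ the field $\wi u$ (here $v$) is determined by its trace data through the propagators $M_p,M_s$ and the factors $e^{{\rm i}\g_p(x_2-h)},e^{{\rm i}\g_s(x_2-h)}$. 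Vanishing of $v$ on the whole line $\G_b$ would immediately force $v\equiv 0$ above $\G_b$ by the injectivity of this Fourier representation; the subtlety is that we only know $v=0$ on the \emph{bounded} segment $\G_{a,b}$, not on all of $\G_b$. To bridge this gap I would invoke unique continuation for the Navier system: since $v$ is a solution of $(\tr^*+\om^2)v=0$ in the connected domain $U_{f_+}\subset\Om$ and vanishes on an open subset of the analytic hypersurface $\G_{a,b}$, real-analyticity of solutions (the Navier operator being elliptic with constant coefficients, hence its solutions are real-analytic in $\Om$) forces $v\equiv 0$ in a neighborhood and then, by connectedness, throughout $U_{f_+}$, and then in all of $\Om$. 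Taking the trace on $\G$ gives $\varphi=v|_\G=0$, proving injectivity.

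For the dense-range claim I would pass to the adjoint and show that $L^*$ is injective; density of the range of $L$ is equivalent to injectivity of $L^*\colon [L^2(\G_{a,b})]^2\to[\wid{H}^{1/2}(\G_R^{(1)})]^*=[H^{-1/2}(\G_R^{(1)})]^2$. The plan is to characterize $L^*$ explicitly via a Betti/Green identity. Given $g\in[L^2(\G_{a,b})]^2$, I would introduce the solution $w$ of the adjoint (transmission-type) problem radiating \emph{downward} with source $g$ supported on $\G_{a,b}$; this is the elastic analogue of the auxiliary field used in the acoustic LSM of \cite{MJKJ17}. Pairing the defining variational identity (\ref{2.17a}) for $v=L\varphi$ against $w$ and integrating by parts with the third generalized Betti formula, the volume terms cancel because both $v$ and $w$ solve the Navier equation, and the contributions on $\G_h$ cancel by the matching UPRC/Dirichlet-to-Neumann behavior through $\mathcal{T}$. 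What survives is a boundary pairing on $\G_R^{(1)}$ of the form $\langle L\varphi,g\rangle_{\G_{a,b}}=\langle\varphi,\,T w|_{\G_R^{(1)}}\rangle$, which identifies $L^*g$ with the Neumann trace $T w$ (the conormal derivative associated with the operator $T$ in (\ref{2.8a})) of $w$ restricted to $\G_R^{(1)}$.

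Injectivity of $L^*$ then reduces to showing that if the conormal trace $Tw$ vanishes on the open segment $\G_R^{(1)}$, and $w$ solves the homogeneous Navier equation away from $\G_{a,b}$ with the appropriate radiation condition, then $g=0$. Since $w$ already satisfies $w=0$ on $\G$ (the Dirichlet datum of the adjoint field inherited from the geometry), having in addition $Tw=0$ on the open piece $\G_R^{(1)}$ gives simultaneously vanishing Cauchy data for the elliptic system $(\tr^*+\om^2)w=0$ on an open part of $\G$. Holmgren's theorem (equivalently unique continuation for the Navier operator across an analytic surface with zero Cauchy data) forces $w\equiv0$ in the component of $\Om$ adjacent to $\G_R^{(1)}$, and propagating this vanishing up to $\G_{a,b}$ shows that the jump of the conormal derivative across $\G_{a,b}$, which equals $g$, must vanish, so $g=0$. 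This yields density of the range.

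The main obstacle I anticipate is the rigorous justification of the unique-continuation/Holmgren step for the coupled elastic system under only the UPRC rather than a classical Sommerfeld condition: one must be careful that the radiation condition genuinely rules out nonzero solutions with vanishing Cauchy data on a bounded segment, and that the Cauchy data $(w,Tw)$ are regular enough (here $g\in L^2$ only) for Holmgren to apply. I expect this to require a density/regularity argument approximating $g$ by smoother data, together with the well-posedness and a priori estimates from Theorem \ref{thm2.1} and Theorem \ref{thm2.2} to control the traces; the decoupling of compressional and shear parts through the Helmholtz decomposition and the explicit propagators $M_p,M_s$ in (\ref{2.7a}) should make the UPRC uniqueness tractable.
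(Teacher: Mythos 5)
Your argument for the dense-range half is essentially the paper's own proof: characterize $L^*$ through a transmission problem with jump data on $\G_{a,b}$, identify $L^*\eta$ with the traction trace $T\ov{w}|_{\G_R^{(1)}}$ via Betti identities (in the paper the cancellation of the terms on $\G_H$ comes from the symmetry $M(\xi)=M(-\xi)^T$ of the Dirichlet-to-Neumann symbol, which is the precise form of your ``matching UPRC/DtN behavior''), and then conclude injectivity of $L^*$ from the vanishing Cauchy data $w=Tw=0$ on $\G_R^{(1)}$, unique continuation, and the jump relation across $\G_{a,b}$. That half is sound and structurally identical to the paper.

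The injectivity half contains a genuine gap. Your bridge from ``$v=0$ on the bounded segment $\G_{a,b}$'' to ``$v\equiv 0$ in $\Om$'' is the claim that a real-analytic solution of $(\tr^*+\om^2)v=0$ vanishing on an open subset of the curve $\G_{a,b}$ must vanish in a two-dimensional neighborhood. That is false: the zero set of a nonzero real-analytic function can contain an entire hypersurface. Concretely, $v(x)=(0,\sin(k_p(x_2-b)))$ solves the Navier equation in all of $\R^2$ and vanishes on the whole line $\G_b$, yet is not identically zero; so analyticity plus connectedness can never give your conclusion, and your chain as written never actually uses the UPRC (you state the Fourier-representation argument, but then bypass it). Holmgren's theorem is also unavailable here, since only the Dirichlet trace of $v$ vanishes on $\G_{a,b}$, not the traction trace. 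The radiation condition is indispensable, and the correct assembly --- which is the paper's --- is: (i) $v$ is analytic near $\G_b$, so its restriction to the line $\G_b$ is real-analytic in $x_1$ and, vanishing on $[-a,a]$, vanishes on all of $\G_b$ (one-dimensional analytic continuation \emph{along} the line, not into a neighborhood); (ii) uniqueness of the Dirichlet problem in the half-plane $U_b$ under the UPRC --- exactly the injectivity of the representation (\ref{2.7a}) that you wrote down --- forces $v\equiv 0$ in $U_b$; (iii) unique continuation from the open set $U_b$ (a genuinely two-dimensional set, where the principle does apply) gives $v\equiv 0$ in $\Om$, and the trace theorem yields $\varphi=0$.
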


\begin{proof}
Let $L\varphi = 0$ for some $\varphi\in [\wid{H}^{\frac{1}{2}}(\G_{R}^{(1)})]^2$. One has $v=0$ on $\G_{a,b}$ from (\ref{2.21a}). Then the uniqueness of the Dirichelt problem in $U_b$ as well as
the unique continuation property yields $v=0$ in $\Omega$, whence $\varphi=0$ follows from the trace theorem.

To show the dense range of $L$ in $[L^2(\G_{a,b})]^2$, it is enough to prove that the adjoint operator $L^*$ of $L$ is injective. We first claim that $L^*$ can be given as 
\be\label{2.22a}
L^*\eta: = T\ov{w}|_{\G_{R}^{(1)}},
\en
where $w$ is the solution to the boundary value problem
\begin{equation}\label{2.23a}
\begin{split}
\tr^* w+\om^2 w = 0 &\quad {\rm in\ }\Om\se\G_{a,b},\\
w = 0 & \quad {\rm on\ }\G,\\
[w]=0,\ \left[Tw\right] = \ov{\eta} & \quad {\rm on\;}\G_{a,b},\\
 w {\ \rm satisfies \ the \ UPRC}
\end{split}
\end{equation}
for $\eta\in [L^2(\G_{a,b})]^2$.  In (\ref{2.23a}), $[\cdot]$ is defined by $[\phi]: = \phi|_+ -\phi|_-$ for
a function $\phi$ with $\cdot|_+$ and $\cdot|_-$ indicating the limits from the top and bottom domains above $\G_{a,b}$, respectively.
Similar to the discussions of Problem (\ref{2.3a})-(\ref{2.7a}) or (\ref{2.14a}), Problem (\ref{2.23a}) can be proved to be well-posed by the variational method
for any given $\eta\in [L^2(\G_{a,b})]^2$. 

 Choose a bounded Lipschitz domain $\Om_{a,b}$ such that $\Om_{a,b}\subset S_H$ for a constant $H>b$ and $\G_{a,b}\subseteq\pa\Om_{a,b}$. Choosing $A>\max\{a,R\}$, we then define a cut-off function $\ka_A(t)\in C^{\infty}_0(\mat{R})$ which satisfies $0\leq \ka_A \leq 1$ for all $x\in\R$, $\ka_A=1$ if $|t|\leq A$, $\ka_A=0$ if $|t|\geq A+1$ and 
 \ben
 \left|\frac{{\rm d}\ka_A}{ {\rm d}t}\right|+\left|\frac{{\rm d}^2\ka_A}{ {\rm d}t^2}\right|\leq C_1
 \enn
 for some constant $C_1$ independent of $A$. 
Moreover, we also define the domains
\ben
S^A_H:=\{x\in S_H:|x_1|\leq A  \},\qquad \G^A_H:=\{x\in\G_H:|x_1|\leq A  \},
\enn
and extends the function $\eta$ by $0$ into $\pa\Omega_{a,b}$ which is denoted by $\wid{\eta}$ and belongs to $ [L^2(\pa\Omega_{a,b})]^2$.

Using the Betti's formula, we have
\be\no
&&\int_{S_H\setminus\ov{\Om}_{a,b}}\ka_A(x_1)v\cdot\tr^*w-w\cdot\tr^*(\ka(x_1)v){\,\rm d}x\\ \no
&=&\int_{\pa S^{A+1}_H}\ka_A(x_1)v\cdot Tw-w\cdot T(\ka(x_1)v){\,\rm d}s
+\int_{\pa \Om_{a,b}}v\cdot Tw|_- -w\cdot Tv{\,\rm d}s\\\no
&=&\int_{ \G^{A+1}_H}\ka_A(x_1)v\cdot Tw-w\cdot T(\ka(x_1)v){\,\rm d}s - \int_{\G_R^{(1)}}v\cdot Tw|_- -w\cdot Tv{\,\rm d}s\\ \no
&&+\int_{\G_{a,b}}v\cdot Tw|_- -w\cdot Tv{\,\rm d}s.
\en
It is noticed by \cite{JG12} that  $v-v_0, w\in V_H$, $\tr^*v$, $\tr^*w\in [L^2(S_H\setminus\ov{\Om}_{a,b})]^2$ and $v$, $w$, $Tv$, $Tw\in[L^2(\G_H)]^2$ we have
\be\no
0&=&\int_{S_H\setminus\ov{\Om}_{a,b}}v\cdot\tr^*w-w\cdot\tr^*v{\,\rm d}x\\ \no
&=&\int_{ \G_H}v\cdot Tw-w\cdot Tv{\,\rm d}s
-\int_{\G_R^{(1)}}v\cdot Tw|_- -w\cdot Tv{\,\rm d}s
+\int_{\G_{a,b}}v\cdot Tw|_- -w\cdot Tv{\,\rm d}s
\en
by leting $A\to\infty$.

Using again the Betti's formula, it is deduced that the last integral equals $0$. Therefore, it holds 
\be\no
(L\varphi,\eta)_{[L^2(\G_{a,b})]^2}
&=& \int_{\G_{a,b}}v\cdot(Tw|_+- Tw|_-){\,\rm d}s\\ \no
&=&\left(\int_{\G_H}-\int_{\G_R^{(1)}}\right) ( v\cdot Tw-Tv\cdot w){\,\rm d}s\\\no 
&=&\int_{\G_R^{(1)}}\varphi\cdot Tw{\,\rm d}s\\\label{2.24a}
&=& \langle\varphi,T\ov{w}\rangle_{H^{1/2}\times H^{-1/2}},
\en
where we have used the fact 
\be\no
\int_{\G_H}( v\cdot Tw-Tv\cdot w){\,\rm d}s&=&\Int_{\G_H}\ov{\hat{\ov{v}}}(\xi)\cdot M(\xi)\hat{w}(\xi)-\ov{\hat{\ov{w}}}(\xi)\cdot M(\xi)\hat{v}(\xi){\,\rm d}\xi\\  \no
 &=&\int_{\G_H}\hat{v}(-\xi)\cdot M(\xi)\hat{w}(\xi)-\hat{w}(-\xi)\cdot M(\xi)\hat{v}(\xi){\,\rm d}\xi\\  \no 
 &=&\int_{\G_H}\hat{v}(-\xi)\cdot M(\xi)\hat{w}(\xi)-\hat{v}(-\xi)\cdot M(-\xi)^T\hat{w}(\xi){\,\rm d}\xi \\  \no
 &=&\int_{\G_H}\hat{v}(-\xi)\cdot (M(\xi)-M(-\xi)^T) \hat{w}(\xi){\,\rm d}\xi \\ \label{2.25a}
 &=& 0.
\en
Consequently, (\ref{2.22a}) is obtained from (\ref{2.24a}).

Let $L^*\eta=0$ for some $\eta\in [L^2(\G_{a,b})]^2$. It follows from (\ref{2.22a}) that $Tw=0$ on $\G_{R}^{(1)}$, which combines the Dirichlet boundary condition $w=0$ on $\Gamma$
with the unique continuation principle to imply that $w = 0$ in $\Om\se \G_{a,b}$. The transmission condition for $w$ on $\G_{a,b}$ now gives $\eta=0$. It means that the adjoint operator $L^*$ is injective. The proof is thus complete.
\end{proof}

\begin{lemma}\label{le2.2}
If $y\in\Om_R$, then $y\in \Om_R\se\ov{\Om}$ if and only if $G(\cdot,y,p;R)|_{\G_{a,b}}\in {\rm Range}(L)$.
\end{lemma}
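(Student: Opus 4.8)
The plan is to prove the two implications separately, exploiting throughout that every field in play is real-analytic in the source-free region $U_{f_+}=\{x_2>f_+\}$ (being a solution of the constant-coefficient elliptic system $(\tr^*+\om^2)\,\cdot=0$ there) and that the relevant fields obey the UPRC. For the ``if'' part, suppose $y\in\Om_R\se\ov{\Om}$. Then the point-source singularity lies outside $\ov{\Om}$, so $G(\cdot,y,p;R)$ is regular throughout $\ov{\Om}$, solves $(\tr^*+\om^2)G=0$ in $\Om$, and --- since the cut-off in $\wid{G}^{in}$ is supported near $y\notin\Om$ --- coincides with $\wid{G}$ in $\Om$ and hence satisfies the UPRC. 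On $\G_R^{(0)}\subset\wid{\G}_R$ the boundary condition in (\ref{2.11a}) gives $G=0$, so $\varphi:=G(\cdot,y,p;R)|_\G$ is supported in $\ov{\G_R^{(1)}}$ and lies in $[\wid{H}^{1/2}(\G_R^{(1)})]^2$. Thus $G(\cdot,y,p;R)|_\Om$ is precisely the solution of Problem~(\ref{2.14a}) with this datum, and by the uniqueness in Theorem~\ref{thm2.2} together with the definition (\ref{2.21a}) of $L$ we conclude $G(\cdot,y,p;R)|_{\G_{a,b}}=L\varphi\in{\rm Range}(L)$.

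For the ``only if'' part I argue by contraposition: assuming $y\in\Om$, I show $G(\cdot,y,p;R)|_{\G_{a,b}}\notin{\rm Range}(L)$. Suppose not, so that $G(\cdot,y,p;R)|_{\G_{a,b}}=L\varphi=v|_{\G_{a,b}}$ for some $\varphi\in[\wid{H}^{1/2}(\G_R^{(1)})]^2$ with $v$ solving (\ref{2.14a}), and put $w:=v-G(\cdot,y,p;R)$. Because $b>f_+$, both $v$ and $G(\cdot,y,p;R)$ solve the homogeneous Navier system in a full neighborhood of $\G_b$ (the only singularity, at $y$, is off $\G_b$), so $w$ is real-analytic there and its restriction to the line $\G_b$ is a real-analytic function of $x_1$. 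Since that restriction vanishes on the open segment $\G_{a,b}$, the identity theorem forces $w|_{\G_b}\equiv0$ on all of $\G_b$. This passage from the finite measurement segment to the whole line is the crux of the argument, and it is exactly the analyticity mechanism already underlying Lemma~\ref{le2.1}. With $w|_{\G_b}=0$ in hand, both $v$ and $G(\cdot,y,p;R)$ satisfy the UPRC in $U_b$, hence so does $w$; the uniqueness of the radiating (UPRC) Dirichlet problem in $U_b$ then yields $w\equiv0$ in $U_b$. Finally, $w$ solves the homogeneous Navier system in the connected punctured domain $\Om\se\{y\}$ and vanishes on the open subset $U_b$, so unique continuation gives $w\equiv0$ in $\Om\se\{y\}$, i.e. $v\equiv G(\cdot,y,p;R)$ there. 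But $v$ is regular at $y$ whereas $G(\cdot,y,p;R)$ inherits the $H_0^{(1)}$-singularity of $G^{in}(\cdot,y,p)$ at $y$, a contradiction. Hence $G(\cdot,y,p;R)|_{\G_{a,b}}\notin{\rm Range}(L)$, and the characterization follows.

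The main obstacle is the analyticity-based upgrade from the segment $\G_{a,b}$ to the full line $\G_b$ and the ensuing UPRC-uniqueness step: these require that the source $y$ lie off the measurement line, so that the fields are analytic across $\G_b$ and $G(\cdot,y,p;R)$ is genuinely upward-radiating in $U_b$. This is automatic for all imaging-relevant samples, since the region to be detected satisfies $\Om_R\se\ov{\Om}\subset\{x_2<f_+<b\}$, and the same position of $y$ makes the regularity claims in the ``if'' part immediate.
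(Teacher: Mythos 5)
Your ``if'' direction is essentially the paper's own argument and is fine. The gap is in the ``only if'' direction. The contrapositive you must prove is: if $y\in\Om_R$ but $y\notin\Om_R\se\ov{\Om}$, then $G(\cdot,y,p;R)|_{\G_{a,b}}\notin{\rm Range}(L)$. Since $\Om_R\cap\ov{\Om}=\Om\cup(\G\cap B_R)$, this hypothesis covers two cases: $y$ in the open domain $\Om$, which you treat, and $y$ lying on the surface portion $\G\cap B_R\subset\G_R^{(1)}$, which you omit entirely. The omitted case cannot be handled by your interior-singularity contradiction: for $y\in\G\cap B_R$ the singular point does not belong to $\Om$ at all, so $G(\cdot,y,p;R)=G^{sc}(\cdot,y,p;R)+G^{in}(\cdot,y,p)$ solves the homogeneous Navier equation in all of $\Om$ and is analytic there; your chain of reasoning would simply conclude $v\equiv G(\cdot,y,p;R)$ in $\Om$ with no contradiction against the regularity of $v$. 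The paper closes this case by a different mechanism, namely trace regularity on the boundary: if $v\equiv G(\cdot,y,p;R)$ in $\Om$, then their traces on $\G_R^{(1)}$ agree, but $v|_{\G_R^{(1)}}=\psi\in[\wid{H}^{1/2}(\G_R^{(1)})]^2$ by construction of Problem (\ref{2.14a}), whereas $G(\cdot,y,p;R)|_{\G_R^{(1)}}\notin[\wid{H}^{1/2}(\G_R^{(1)})]^2$ because the logarithmic singularity of the elastic fundamental solution now sits at a point of $\G_R^{(1)}$ itself (a $\log|x-y|$ profile on a curve through $y$ fails to be $H^{1/2}$ near $y$). Without this, or an equivalent boundary argument, you have proved a strictly weaker statement than the lemma; it happens to suffice for Theorem \ref{thm3.3}, which only invokes the lemma for $y\in\Om_R\se\ov{\Om}$ and $y\in\Om$, but it is not the equivalence as stated.

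A secondary, smaller point: your UPRC-uniqueness step in $U_b$ requires the singularity to lie strictly below the line $x_2=b$; for $y\in\ov{U}_b$ the difference $w=v-G(\cdot,y,p;R)$ is not a solution throughout $U_b$, and $G(\cdot,y,p;R)$ is not upward propagating there, so the argument as written does not apply. You acknowledge this but dismiss it by restricting to ``imaging-relevant'' samples, whereas the lemma quantifies over all $y\in\Om_R$. In fairness, the paper's proof is equally terse on this configuration (it appeals to ``analyticity on $\G_b$ and unique continuation'' without discussing $y\in\ov{U}_b$), so I would flag this as an imprecision shared with the original rather than as a defect specific to your proposal; the missing boundary case above is the substantive omission.
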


\begin{proof}
If $y\in \Om_R\se\ov{\Om}$, it can be seen that $G(\cdot,y,p;R)$ is the solution to the Navi\'{e}r equation in $\Om$ satisfying 
the UPRC and the boundary condition $G(\cdot,y,p;R)=0$ on $\G^{(0)}_R$. Let $\varphi:= G(\cdot,y,p;R)|_{\G^{(1)}_R}$. Then, by the definition of $L$, one has $L\varphi=G(\cdot,y,p;R)|_{\G_{a,b}}$. Hence, $G(\cdot,y,p;R)|_{\G_{a,b}}\in {\rm Range}(L)$.

On the other hand, assume on the contrary that $y\in\Om_R$ with $y\notin \Om_R\se\ov{\Om}$. Since $G(\cdot,y,p;R)|_{\G_{a,b}}\in {\rm Range}(L)$, there exists $\psi \in [\wid{H}^{1/2}(\G_{R}^{(1)})]^2 $ such that 
$L\psi=G(\cdot,y,p;R)|_{\G_{a,b}}$. Let $v$ be the solution to Problem (\ref{2.14a}) with the boundary data $\psi$. We deduce from the analyticity of both $v$ and $G(\cdot,y,p;R)$ on $\Gamma_b$
and the unique continuation principle that $v(\cdot) = G(\cdot,y,p;R)$ in $\Omega\se\{z\}$. However, this is a contradiction for both $z\in\Om$ and $z\in\G_R^{(1)}$. This is because, if $z\in \Om$,
$v$ is continuous but $G(\cdot,y,p;R)$ is singular at $x=z$, and if $z\in \G_R^{(1)}$, $v|_{\G_R^{(1)}}\in [\wid{H}^{1/2}(\G_{R}^{(1)})]^2$ but $G(\cdot,y,p;R)|_{\G_R^{(1)}}\notin [\wid{H}^{1/2}(\G_{R}^{(1)})]^2$. The proof is thus complete.
\end{proof}

\section{The sampling method}\label{sec3}
\setcounter{equation}{0}

In this section, we concern with the inverse elastic scattering problem of recovering an unbounded rough surface by the near-field measurements. The objective is to propose a sampling-type method to image the shape and location of the unbounded rough surface. Our analysis is mainly based on the following modified near-field equation
\be\label{3.1a}
({\bf N}_{\rm Mod}g)(\cdot)=(G^{sc}(\cdot,y,p;R)+G^{in}(\cdot,y,p))|_{\G_{a,b}}\qquad {\rm for\;} \ y\in\Om_R,
\en
where $p\in\C^2$,  $G^{sc}$ is the solution of Problem (\ref{2.11a}), and ${\bf N}_{ \rm Mod}: [L^2(\G_{a,b})]^2\to [L^2(\G_{a,b})]^2 $ is defined by
\be\label{3.2a}
({\bf N}_{\rm Mod}g)(x)=\int_{\G_{a,b}}(u^{sc}(x,y,g(y);R)-G^{sc}(x,y,g(y);R)){\,\rm d}s(y) \qquad {\rm for\;} \  x\in\G_{a,b}
\en
for sufficiently large $R\in\R_+$.

It is known by (\ref{2.14a}) that ${\bf N}_{\rm Mod}g$ corresponds to the scattered field associated with the incidence operator 
${\bf H}_{\rm Mod}:[L^2(\G_{a,b})]^2\to [\wid{H}^{\frac{1}{2}}(\G_{R}^{(1)})]^2 $:
\be\label{3.3a}
({\bf H}_{\rm Mod}g)(x)=\int_{\G_{a,b}} G(x,y,g(y);R){\,\rm d}s(y) \qquad {\rm for\;} \ x \in\G_{R}^{(1)}.
\en

\begin{lemma}\label{le3.1}
If $\om^2>0$ is not a Dirichlet eigenvalue for $-\tr^*$ in $\Om_R\se\ov{\Om}$, then ${\rm Range}({\bf H}_{\rm Mod})$ is dense in $[\wid{H}^{1/2}(\G_R^{(1)})]^2$ .
\end{lemma}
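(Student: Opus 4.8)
The plan is to deduce the density of ${\rm Range}({\bf H}_{\rm Mod})$ from the injectivity of the adjoint ${\bf H}_{\rm Mod}^{*}$, exactly as in the proof of Lemma~\ref{le2.1}. Because $\langle H^{1/2}(\G_R^{(1)}),\wid{H}^{-1/2}(\G_R^{(1)})\rangle$ is a dual pairing, any bounded linear functional on $[\wid{H}^{1/2}(\G_R^{(1)})]^2$ annihilating the range is represented by some $\eta\in[H^{-1/2}(\G_R^{(1)})]^2$. By the Hahn--Banach theorem it therefore suffices to show that the condition $\langle{\bf H}_{\rm Mod}g,\eta\rangle=0$ for all $g\in[L^2(\G_{a,b})]^2$ forces $\eta=0$.

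The first real step is to make the adjoint explicit. Writing the total field in matrix form as $G(x,y,p;R)=\mat{G}(x,y;R)p$ and invoking the Betti reciprocity relation $\mat{G}(x,y;R)^{\top}=\mat{G}(y,x;R)$ for the Dirichlet total field on the perturbed domain $\Om_R$, I would interchange the order of integration in the pairing (a duality computation entirely parallel to (\ref{2.24a})) and recognise the annihilation condition as $w=0$ on $\G_{a,b}$, where
\ben
w(z):=\int_{\G_R^{(1)}}\mat{G}(z,x;R)\,\ov{\eta}(x)\,{\rm d}s(x),\qquad z\in\Om_R\se\G_R^{(1)},
\enn
is the single-layer potential over $\G_R^{(1)}$ built from the perturbed Green's tensor (the complex conjugation being a routine bookkeeping matter). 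Since $g$ ranges over all of $[L^2(\G_{a,b})]^2$, vanishing of the pairing is equivalent to $w\equiv0$ on the measurement segment $\G_{a,b}$.

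It remains to propagate this vanishing and conclude $\eta=0$. As each $\mat{G}(\cdot,x;R)$ solves the Navier equation and satisfies the UPRC, so does $w$ in $\Om$, and $w$ is real-analytic there; hence $w=0$ on $\G_{a,b}$ extends by analyticity in $x_1$ to $w=0$ on the whole line $\G_b$, and then uniqueness of the Dirichlet problem in $U_b$ together with the UPRC and the unique continuation principle gives $w=0$ in $\Om$, so that $w|_{+}=0$ on $\G_R^{(1)}$. Turning to the bounded region $\Om_R\se\ov{\Om}$, I note that $w$ satisfies $(\tr^*+\om^2)w=0$ there, vanishes on $\wid{\G}_R\cap\pa(\Om_R\se\ov{\Om})$ because $\mat{G}(z,x;R)=0$ for $z\in\wid{\G}_R$, and satisfies $w|_{-}=w|_{+}=0$ on $\G_R^{(1)}$ by the continuity of the single-layer potential. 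Thus $w$ solves a homogeneous Dirichlet problem for $-\tr^*$ at the value $\om^2$ in $\Om_R\se\ov{\Om}$; this is precisely where the hypothesis is used, and since $\om^2$ is not a Dirichlet eigenvalue there, $w=0$ in $\Om_R\se\ov{\Om}$ as well. Finally $w$ vanishes on both sides of $\G_R^{(1)}$, so the tractions $Tw|_{\pm}$ vanish and the single-layer jump relation yields $\ov{\eta}=[Tw]=0$, i.e. $\eta=0$.

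The step I expect to be the main obstacle is the rigorous identification of ${\bf H}_{\rm Mod}^{*}$ and of the jump relations at the low regularity $\eta\in[H^{-1/2}(\G_R^{(1)})]^2$: one must justify the reciprocity identity $\mat{G}(x,y;R)^{\top}=\mat{G}(y,x;R)$ for the perturbed total field (through Betti's formula and the UPRC-based uniqueness of Theorem~\ref{thm2.1}), validate the Fubini interchange within the duality pairing, and establish the mapping and jump properties of the single-layer operator in the correct trace spaces on the Lipschitz arc $\G_R^{(1)}$. Once these are in place, the Dirichlet-eigenvalue hypothesis enters cleanly in the single bounded-domain step, and the remainder follows from analyticity, the UPRC, and unique continuation as above.
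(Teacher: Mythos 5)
Your proposal is correct and follows essentially the same route as the paper's own proof: reduce density of ${\rm Range}({\bf H}_{\rm Mod})$ to injectivity of the adjoint, identify ${\bf H}_{\rm Mod}^{*}$ via the Betti reciprocity relation for the perturbed total field $G(\cdot,\cdot,\cdot;R)$, view the resulting function as a single-layer potential with kernel $G(\cdot,x;R)$ over $\G_R^{(1)}$, propagate its vanishing from $\G_{a,b}$ into $\Om$ by analyticity and unique continuation, invoke the Dirichlet non-eigenvalue hypothesis in $\Om_R\se\ov{\Om}$, and finish with the traction jump relation to get $\eta=0$. The only difference is presentational: the paper carries out in detail the reciprocity proof (cut-off function $\ka_A$, Betti's formula, and the vanishing of the $\G_{H'}$ integral) that you correctly flag as the main technical obstacle and defer.
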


\begin{proof}
First, we briefly prove the following reciprocity relation for the elastic wave scattering by a unbounded rough surface:
\be\label{3.4a}
p\cdot G(x,y,q;R)=q\cdot G(y,x,p;R) \qquad {\rm for}\ x,y\in\Om_R,\ x\neq y,\ p,q\in\C^2.
\en
Here, we remark that a similar reciprocity relation was recently proved by Liu {\em et al.} in \cite{XBH19} under a different radiation condition proposed by Arens in \cite{T02} for the pseudo stress operator
$T_{\wid{\mu},\wid{\la}}$ with $\wid{\mu}:=\mu(\mu+\la)/(3\mu+\la)$ and $\wid{\la}:=(2\mu+\la)(\mu+\la)/(3\mu+\la)$. For the sake of completeness, in the following we give a detailed proof of (\ref{3.4a})
under the radiation condition (\ref{2.7a}) and the lam{\'e} constants are given by $\wid{\mu}=0$ and $\wid{\la}=\la+\mu$ here.

To simplify the notations, let $G(x,y,q;R)$ and $G(y,z,p;R)$ be denoted by $G(x,y,q)$ and $G(y,z,p)$, respectively. For fixed $x,y\in\Om_R$ with $x\neq y$, we can first choose one constant $H'>{\rm max}\{x_2,y_2\}$ such that $x,y\in S_{R,H'}$, where $S_{R,H'}: = \Om_R\se \ov{U}_{H'}$. Define $B_\vep(x):=\{z\in\mat{R}^2:|z-x|\leq\vep \}$ and choose small enough $\vep>0$ such that $\ov{B_\vep(x)\cup B_\vep(y)}\subset S^{A+1}_{R,H'}$, where $S^A_{R,H'}:=\{x\in S_{R,H'}:|x_1|\leq A  \}$ for $A>R$.

Then, by the Betti's formula and the Dirichlet boundary conditions for $G(\cdot,x,p)$ and $G(\cdot,y,q)$, we obtain
\begin{align*}
&\Int_{S_{R,H'} \se\ov{B_\vep(x)\cup B_{\vep}(y)}}
           (\tr^* G(z,y,q)\cdot \ka_A(x_1)G(z,x,p) - G(z,y,q)\cdot \tr^* (\ka_A(x_1)G(z,x,p)) ){\,\rm d}z\\ \no
&=\Int_{S^{A+1}_{R,H'} \se\ov{B_\vep(x)\cup B_{\vep}(y)}}
           (\tr^* G(z,y,q)\cdot \ka_A(x_1)G(z,x,p) - G(z,y,q)\cdot \tr^* (\ka_A(x_1)G(z,x,p)) ){\,\rm d}z\\ \no
&= \Int_{\G^{A+1}_{H'}}+\Int_{\pa B_\vep(x)} +\Int_{\pa B_\vep(y)}
          \left(\ka_A(x_1)G(z,x,p)\cdot T_zG(z,y,q) -  T_z(\ka_A(x_1) G(z,x,p))\cdot G(z,y,q) \right)
          {\,\rm d}s(z)
\end{align*}
where $\ka_A$ is a cut-off function given in the proof of Lemma \ref{le2.1}. 
Similarily, letting $A\to\infty$ yields 
\be\no
0 &=& \Int_{S_{R,H'} \se\ov{B_\vep(x)\cup B_{\vep}(y)}}
           (\tr^* G(z,y,q;R)\cdot G(z,x,p;R) - G(z,y,q;R)\cdot \tr^* G(z,x,p;R) ){\,\rm d}z\\ \no
  &=& \Int_{\G_{H'}}+\Int_{\pa B_\vep(x)} +\Int_{\pa B_\vep(y)}
          \left(G(z,x,p;R)\cdot T_zG(z,y,q;R) -  T_z G(z,x,p;R)\cdot G(z,y,q;R) \right)
          {\,\rm d}s(z)\\ \label{3.5b}
  &=& I_1+I_2+I_3
\en
Noting $\wi{G}(\cdot,x,p;R)|_{\G_{H'} }=G(\cdot,x,p;R)_{\G_{H'}} $ and $\wi{G}(\cdot,y,q;R)|_{\G_{H'} }=G(\cdot,y,q;R)_{\G_{H'}}$,  we then have
$G(\cdot,x,p;R) $,  $G(\cdot,y,q;R) $, $TG(\cdot,x,p;R)$, $TG(\cdot,x,p;R)\in[L^2(\G_{H'}) ]^2$ from Lemma 1 of \cite{JG12} with $\wi{G}|_{S_{H'}}\in [H^1(S_{H'})]^2$, which
implies $I_1=0$ in a similar way to (\ref{2.25a}). It follows from the singularities of $G$ and $TG$ at $z=x$ and $z=y$ that $I_2=-p\cdot G(x,y,q;R)$ and $I_3=q\cdot G(y,x,p;R)$. Thus, we have proved the equality (\ref{3.4a}).

Next, for $g\in [L^2(\G_{a,b})]^2$ and $\psi\in [H^{-1/2}(\G_R^{(1)})]^2$ we use (\ref{3.4a}) to deduce 
\be\no
\langle \psi, {\bf H}_{\rm Mod}g \rangle_{H^{-1/2}\times H^{1/2}}
&=&\int_{\G_R^{(1)}}\psi(x)\cdot\ov{\int_{\G_{a,b}}G(x,y,g(y);R){\,\rm d}s(y)}{\,\rm d}s(x)\\\no
&=&\int_{\G_R^{(1)}} \int_{\G_{a,b}}\psi(x)\cdot\ov{G(x,y,g(y);R)}{\,\rm d}s(y) {\,\rm d}s(x)\\ \no
&=&\int_{\G_R^{(1)}} \int_{\G_{a,b}}\ov{g(y)}\cdot\ov{G(y,x,\ov{\psi(x)};R)}{\,\rm d}s(y) {\,\rm d}s(x)\\ \no
&=&\int_{\G_{a,b}}\ov{g(y)}\cdot\int_{\G_R^{(1)}}\ov{G(y,x,\ov{\psi(x)};R)}{\,\rm d}s(x) {\,\rm d}s(y)\\ \label{3.6a}
&=&:({\bf H}_{\rm Mod}^*\psi,g)_{[L^2(\G_{a,b})]^2\times [L^2(\G_{a,b})]^2}.
\en
It is seen from (\ref{3.6a}) that  the adjoint operator ${\bf H}_{\rm Mod}^*$ of ${\bf H}_{\rm Mod}$ is given by
\be\label{3.11a}
({\bf H}_{\rm Mod}^*\psi)(y)=\int_{\G_R^{(1)}}\ov{G(y,x,\ov{\psi(x)};R)}{\,\rm d}s(x)\qquad {\rm for}\ y\in \G_{a,b}
\en
in the sense of the $L^2$-inner product.

Assume that there exists $\psi\in [H^{-1/2}(\G_R^{(1)})]^2$ such that $ {\bf H}_{\rm Mod}^*\psi =0$. Let 
\ben
v(y)=\int_{\G_R^{(1)}}\ov{G(y,x,\ov{\psi(x)};R)}{\,\rm d}s(x)\qquad {\rm for\;\;}y\in \Om_R, 
\enn 
which gives $v=0$ on $\G_{a,b}$. It then follows from the analyticity of $v$ that $v=0$ in $\Om$. Since $G(\cdot,x,p;R)=0$ on $\wid{\G}_R$ for $x\in \G_R^{(1)}$, $p\in\C^2$, by using the jump relation for single-layered potential operator, we find that $v$ satisfies the following Dirichlet problem
\be\label{3.12a}
\tr^*v+\om^2v=0\quad {\rm in} \ \Om_R\se\ov{\Om},\qquad\ v=0\quad {\rm on} \ \pa(\Om_R\se\ov{\Om}).
\en
According to the assumption that $\om^2$ is not a Dirichlet eigenvalue for $-\tr^*$ in $\Om_R\se\ov{\Om}$, we have $v=0$ on $\Om_R\se\ov{\Om}$. Then the jump relation of the normal derivation of the single-layered potential operator gives $\psi=0$ on $\G_{a,b}$, i.e., $L^*$ is injective. This yields that  ${\rm Range}({\bf H}_{\rm Mod})$ is dense in $[\wid{H}^{1/2}(\G_R^{(1)})]^2$ . The proof is thus complete.
\end{proof}

It follows from the definitions of $L$, ${\bf H}_{\rm Mod}$ and ${\bf N}_{\rm Mod}$ that ${\bf N}_{\rm Mod}=-L{\bf H}_{\rm Mod}$. Furthermore, we have the following theorem.

\begin{theorem}\label{thm3.3}
Assume that $\om^2>0$ is not a Dirichlet eigenvalue for $-\tr^*$ in $\Om_R\se\ov{\Om}$. 

If $y\in\Om_R\se\ov{\Om}$, for $\vep>0$, there exists $g_{y,\vep}\in [L^2(\G_{a,b})]^2$ satisfying the inequality
\be\label{3.13a}
\|{\bf N}_{\rm Mod} g_{y,\vep}(\cdot)-G(\cdot,y,p;R)\|_{[L^2(\G_{a,b})]^2}\leq \vep
\en
such that $\| g_{y,\vep}\|_{[L^2(\G_{a,b})]^2}\to \infty$ and $\|v_{g_{y,\vep}}\|_{[H^1(\Om_R\se\ov{\Om})]^2}\to\infty$
as $y\to \G$, where 
\ben
v_{g_{y,\eps}}(x):=\int_{\G_{a,b}}G(x,z,g_{y,\eps}(z);R){\, \rm d}s(z) .
\enn

If $y\in\Om$, for $\vep>0$ and $\delta>0$, there exists $g^{\vep,\de}_{y,\al}\in [L^2(\G_{a,b})]^2$ satisfying the inequality
\be\label{3.14a}
\|{\bf N}_{\rm Mod} g^{\vep,\de}_{y,\al}(\cdot)-G(\cdot,y,p;R)\|_{[L^2(\G_{a,b})]^2}\leq \vep+\delta
\en
such that $\| g^{\vep,\de}_{y,\al}\|_{[L^2(\G_{a,b})]^2}\to \infty$ and $\|v_{g^{\vep,\de}_{y,\al}}\|_{[H^1(\Om_R\se\ov{\Om})]^2}\to\infty$ as $\delta\to 0$.
\end{theorem}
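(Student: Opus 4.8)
The plan is to exploit the factorization $\mathbf N_{\rm Mod}=-L\,\mathbf H_{\rm Mod}$ together with the three structural facts already established: $L$ is bounded, injective and has dense range (Lemma~\ref{le2.1}); $\mathrm{Range}(\mathbf H_{\rm Mod})$ is dense in $[\widetilde H^{1/2}(\Gamma_R^{(1)})]^2$ (Lemma~\ref{le3.1}); and the range characterization $y\in\Omega_R\setminus\overline\Omega\Leftrightarrow G(\cdot,y,p;R)|_{\Gamma_{a,b}}\in\mathrm{Range}(L)$ (Lemma~\ref{le2.2}). In both cases I would follow the same three moves: produce a density $\varphi$ on $\Gamma_R^{(1)}$ whose image $L\varphi$ is close to $G(\cdot,y,p;R)|_{\Gamma_{a,b}}$; use the dense range of $\mathbf H_{\rm Mod}$ to realize $-\varphi$ approximately as $\mathbf H_{\rm Mod}g$; and read off the claimed near-field estimate from the boundedness of $L$. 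The divergence of $\|g\|$ and of $\|v_g\|_{H^1}$ will then be forced by the divergence of $\|\varphi\|_{[\widetilde H^{1/2}(\Gamma_R^{(1)})]^2}$.

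\textbf{Case $y\in\Omega_R\setminus\overline\Omega$.} Here Lemma~\ref{le2.2} supplies $\varphi_y:=G(\cdot,y,p;R)|_{\Gamma_R^{(1)}}\in[\widetilde H^{1/2}(\Gamma_R^{(1)})]^2$ with $L\varphi_y=G(\cdot,y,p;R)|_{\Gamma_{a,b}}$. Given $\varepsilon>0$, Lemma~\ref{le3.1} provides $g_{y,\varepsilon}$ with $\|\mathbf H_{\rm Mod}g_{y,\varepsilon}+\varphi_y\|\le\varepsilon/\|L\|$, and applying $L$ yields (\ref{3.13a}) at once. To obtain the blow-up I would show $\|\varphi_y\|_{[\widetilde H^{1/2}(\Gamma_R^{(1)})]^2}\to\infty$ as $y\to\Gamma$: the trace on $\Gamma_R^{(1)}$ of $G(\cdot,y,p;R)$ carries the logarithmic singularity of the Navier fundamental solution at $x=y$, and as $y$ tends to a point of $\Gamma_R^{(1)}$ this singularity moves onto the boundary, so the $H^{1/2}$-norm cannot remain bounded; rigorously, if it stayed bounded along some $y_n\to x_0\in\Gamma$, Theorem~\ref{thm2.2} would bound $G(\cdot,y_n,p;R)$ in $[H^1(D)]^2$ on a fixed neighborhood $D$ of $x_0$, contradicting that the fundamental solution has infinite Dirichlet energy at its source. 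Once $\|\varphi_y\|\to\infty$, the reverse triangle inequality gives $\|\mathbf H_{\rm Mod}g_{y,\varepsilon}\|\to\infty$, hence $\|g_{y,\varepsilon}\|_{[L^2(\Gamma_{a,b})]^2}\to\infty$ by boundedness of $\mathbf H_{\rm Mod}$; and since $v_{g_{y,\varepsilon}}|_{\Gamma_R^{(1)}}=\mathbf H_{\rm Mod}g_{y,\varepsilon}$, the trace inequality forces $\|v_{g_{y,\varepsilon}}\|_{[H^1(\Omega_R\setminus\overline\Omega)]^2}\to\infty$.

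\textbf{Case $y\in\Omega$.} Now $G(\cdot,y,p;R)|_{\Gamma_{a,b}}\notin\mathrm{Range}(L)$ by Lemma~\ref{le2.2}, yet the dense range of $L$ lets me pick, for each $\delta>0$, a density $\varphi_{y,\delta}$ with $\|L\varphi_{y,\delta}-G(\cdot,y,p;R)|_{\Gamma_{a,b}}\|\le\delta$. The crucial point is that $\|\varphi_{y,\delta}\|_{[\widetilde H^{1/2}(\Gamma_R^{(1)})]^2}\to\infty$ as $\delta\to0$: otherwise a bounded subsequence would converge weakly to some $\varphi^{*}$, and weak-to-weak continuity of the bounded map $L$ combined with the strong convergence $L\varphi_{y,\delta}\to G(\cdot,y,p;R)|_{\Gamma_{a,b}}$ would give $L\varphi^{*}=G(\cdot,y,p;R)|_{\Gamma_{a,b}}$, contradicting the range statement. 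Choosing $g^{\varepsilon,\delta}_{y,\alpha}$ from Lemma~\ref{le3.1} so that $\|\mathbf H_{\rm Mod}g^{\varepsilon,\delta}_{y,\alpha}+\varphi_{y,\delta}\|\le\varepsilon/\|L\|$ and splitting $\|\mathbf N_{\rm Mod}g^{\varepsilon,\delta}_{y,\alpha}-G\|\le\|L(\mathbf H_{\rm Mod}g^{\varepsilon,\delta}_{y,\alpha}+\varphi_{y,\delta})\|+\|L\varphi_{y,\delta}-G\|$ yields (\ref{3.14a}) with bound $\varepsilon+\delta$; the divergence of $\|g^{\varepsilon,\delta}_{y,\alpha}\|$ and of $\|v_{g^{\varepsilon,\delta}_{y,\alpha}}\|_{H^1}$ follows exactly as before from $\|\varphi_{y,\delta}\|\to\infty$.

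The main obstacle is the Green-function blow-up $\|\varphi_y\|\to\infty$ in the first case, the only genuinely PDE-theoretic ingredient; everything else is functional analysis resting on Lemmas~\ref{le2.1}--\ref{le3.1}. I would handle it by the contradiction/compactness argument above, tied to the infinite Dirichlet energy of the fundamental solution at its source, taking care that $y$ approaches $\Gamma$ through the perturbed region $\Omega_R\setminus\overline\Omega$ whose upper boundary is exactly $\Gamma_R^{(1)}$. A secondary subtlety is the passage from the $H^{1/2}$-trace bound to the asserted $[H^1(\Omega_R\setminus\overline\Omega)]^2$ blow-up of $v_g$, which must use the trace inequality in the direction $\|v_g\|_{H^1}\ge C^{-1}\|v_g|_{\Gamma_R^{(1)}}\|_{H^{1/2}}$ together with the fact that $v_g$ is a bona fide Navier solution in $\Omega_R\setminus\overline\Omega$ with trace $\mathbf H_{\rm Mod}g$ on $\Gamma_R^{(1)}$.
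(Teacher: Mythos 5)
Your proposal is correct, and for the case $y\in\Om_R\se\ov{\Om}$ it is essentially the paper's own argument: the factorization $\mathbf{N}_{\rm Mod}=-L\mathbf{H}_{\rm Mod}$, Lemma~\ref{le2.2} to produce $\varphi_y$ with $L\varphi_y=G(\cdot,y,p;R)|_{\G_{a,b}}$, Lemma~\ref{le3.1} to approximate $\varphi_y$ in the range of $\mathbf{H}_{\rm Mod}$, and blow-up of $\|\varphi_y\|_{[\wid{H}^{1/2}(\G_R^{(1)})]^2}$ as $y\to\G$ caused by the singularity of the point source reaching the boundary; the paper phrases this blow-up as a reverse triangle inequality on the trace norm ($\|G^{in}\|\to\infty$ while $\|G^{sc}\|$ stays bounded), whereas you argue via Theorem~\ref{thm2.2} and the infinite Dirichlet energy of the fundamental solution --- equivalent in substance, and note that your version, like the paper's, implicitly uses the uniform boundedness of $G^{sc}(\cdot,y_n,p;R)$ as $y_n\to y_0$, which holds because the source stays at positive distance from $\wid{\G}_R$ but is only asserted, not proved, in either argument. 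Where you genuinely diverge is the case $y\in\Om$: the paper solves $Lf=G(\cdot,y,p;R)|_{\G_{a,b}}$ by Tikhonov regularization (\ref{3.18a}) and invokes Picard theory with a singular system of $L$, which requires $L$ to be \emph{compact} --- a property the paper asserts but never establishes --- and concludes $\|f^{\de}_{y,\al}\|\to\infty$ as $\al\to 0$ from $G(\cdot,y,p;R)|_{\G_{a,b}}\notin{\rm Range}(L)$. You instead select $\varphi_{y,\de}$ by the dense range of $L$ (Lemma~\ref{le2.1}) and force $\|\varphi_{y,\de}\|\to\infty$ by weak compactness in the Hilbert space $[\wid{H}^{1/2}(\G_R^{(1)})]^2$: a bounded subsequence would converge weakly, $L$ is weak-to-weak continuous, and uniqueness of weak limits would give $L\varphi^{*}=G(\cdot,y,p;R)|_{\G_{a,b}}$, contradicting Lemma~\ref{le2.2}. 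This route is more elementary, needs no spectral machinery, and actually closes the paper's unproved-compactness gap; what the paper's route buys in exchange is constructiveness, since the regularized density $f^{\de}_{y,\al}$ is precisely what the numerical scheme (\ref{4.1a}) computes, tying the theorem to the algorithm. Both arguments finish identically: triangle inequality for (\ref{3.13a}) and (\ref{3.14a}), reverse triangle inequality plus boundedness of $\mathbf{H}_{\rm Mod}$ for $\|g\|_{[L^2(\G_{a,b})]^2}\to\infty$, and the trace inequality on $\Om_R\se\ov{\Om}$ for $\|v_g\|_{[H^1(\Om_R\se\ov{\Om})]^2}\to\infty$.
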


\begin{proof}
If $y\in \Om_R\se\ov{\Om}$, we have $G(\cdot,y,p;R)|_{\G_{a,b}}\in {\rm Range}(L)$ by Lemma \ref{le2.2}. Then there exists $\psi\in[\wid{H}^{1/2}(\G_R^{(1)})]^2$ such that $L\psi=G(\cdot,y,p;R)|_{\G_{a,b}}$. Given $\vep>0$, by Lemma \ref{le3.1}, we can choose $g_{y,\vep}\in [L^2(\G_{a,b})]^2$ such that
\be\label{3.15a}
\|{\bf H}_{\rm Mod}g_{y,\vep}-\psi\|_{[\wid{H}^{1/2}(\G_R^{(1)})]^2}\leq
\frac{\vep}{\|L\|}.
\en
Recalling the definition of $L$ from (\ref{2.21a}), we have ${\bf N}_{\rm Mod}=-L{\bf H}_{\rm Mod}$.
Consequencely, it follows 
\ben
\|{\bf N}_{\rm Mod}g_{y,\vep}(\cdot)-G(\cdot,y,p;R)\|_{[L^{2}(\G_{a,b})]^2}&=&\|L{\bf H}_{\rm Mod}g_{y,\vep}-L\psi\|_{[L^{2}(\G_{a,b})]^2}\\
&\leq& \|L\| \|{\bf H}_{\rm Mod}g_{y,\vep}-\psi       \|_{[\wid{H}^{1/2}(\G_R^{(1)})]^2} \\
&\leq&\vep.
\enn
Next, we shall show that $\| g_{y,\vep}\|_{[L^2(\G_{a,b})]^2}\to \infty$ as $y$ approaches $\G$. On the contrary,  we assume that there exists one fixed $C_0>0$ such that $\| g_{y,\vep}\|_{[L^2(\G_{a,b})]^2}\leq C_0$, as $y$ in $\Om_R\se\ov{\Om}$ tends to some point $y_0\in\G$ along the normal direction of $y_0$. Noting $\psi(\cdot)=G(\cdot,y,p;R)|_{\G}$ for $y\in \Om_R\se\ov{\Om}$, we deduce
\be\label{3.16a}
\|G^{sc}(\cdot,y,p;R)+G^{in}(\cdot,y,p) \|_{[\wid{H}^{1/2}(\G_R^{(1)})]^2}\leq
\frac{\vep}{\|L\|}+C_0\|{\bf H}_{\rm Mod}\|.
\en
from (\ref{3.15a}). However, this contradicts with the fact 
\be\no
&&\|G^{sc}(\cdot,y,p;R)+G^{in}(\cdot,y,p) \|_{[\wid{H}^{1/2}(\G_R^{(1)})]^2}\\ \no
 &&\qquad\qquad\qquad\qquad \geq  \|G^{in}(\cdot,y,p) \|_{[\wid{H}^{1/2}(\G_R^{(1)})]^2} 
       -\|G^{sc}(\cdot,y,p;R)\|_{[\wid{H}^{1/2}(\G_R^{(1)})]^2}\\ \label{3.17a}
       &&\qquad\qquad\qquad\qquad \to \ify\qquad\qquad \ {\rm as\;} \  y\to y_0
\en
due to the uniform boundedness of $\|G^{sc}(\cdot,y,p;R)\|_{[\wid{H}^{1/2}(\G_R^{(1)})]^2}$. So, we have $\| g_{y,\vep}\|_{[L^2(\G_{a,b})]^2}\to \infty$ as $y$ approaches $\G$, whence 
$\|v_{g_{y,\vep}}\|_{[H^1(\Om_R\se\ov{\Om})]^2}\to\infty$ as $y\to y_0$ follows from the trace theorem.

If $y\in\Om$, it is known  from Lemma \ref{le2.2} that $G(\cdot,y,p;R)|_{\G_{a,b}}\notin {\rm Range}(L)$. Then the the equation $L f=G(\cdot,y,p;R)|_{\G_{a,b}}$ has to be solved by considering 
its regularized equation
\be\label{3.18a}
\al f_{\al}+L^* L f_{\al} =L^* (G(\cdot,y,p;R)|_{\G_{a,b}} ) 
\en
since $L$ is an injective and compact operator with dense range, where $\al>0$ is the regularized parameter.
Using the Picard theorem, the solution to (\ref{3.18a}) can be represented as 
\be\label{3.19a}
f_\al = \sum_{n=1}^\infty\frac{\mu_n}{\al+\mu_n^2}(G(\cdot,y,p;R)|_{\G_{a,b}},\psi_n)\varphi_n
\en
where $(\mu_n,\varphi_n,\psi_n)$ is a singular system for $L$. 

By \cite[Theorem 2.13]{FD06}, for $\de>0$ we can choose an associated $\al>0$ such that
\be\label{3.20a}
\|L f^{\de}_{y,\al}(\cdot)-G(\cdot,y,p;R)|_{\G_{a,b}}\|_{[L^2(\G_{a,b})]^2}\leq \de.
\en
Furthermore, it can be concluded by the Picard theorem that $\|f^{\de}_{y,\al}\|_{[H^{1/2}(\G_R^{(1)})]^2 }\to\infty$ as $\al\to 0$, due to 
$G(\cdot,y,p;R)|_{\G_{a,b}}\notin {\rm Range}(L)$.  With the aid of Lemma \ref{le3.1}, we now arrive at that for any given $\vep>0$ 
$g^{\vep,\de}_{y,\al} \in [L^2(\G_{a,b})]^2$ could be chosen with 
\be\label{3.21a}
\|f^{\de}_{y,\al} - H_{\rm Mod}g^{\vep,\de}_{y,\al}\|_{[H^{1/2}(\G^{(1)}_R)]^2} \leq \frac{\vep}{\|L\|}.
\en
Hence, we have
\be\no
\|{\bf N}_{\rm Mod}g^{\vep,\de}_{y,\al} - G(\cdot,y,p;R)\|_{[L^2(\G_{a,b})]^2}
&=&\|L H_{\rm Mod}g^{\vep,\de}_{y,\al} - G(\cdot,y,p;R)\|_{[L^2(\G_{a,b})]^2} \\ \no
&\leq& \|L H_{\rm Mod}g^{\vep,\de}_{y,\al} - L f^{\de}_{y,\al} \|_{[L^2(\G_{a,b})]^2} \\
\no &&+ \|L f^{\de}_{y,\al} - G(\cdot,y,p;R)\|_{[L^2(\G_{a,b})]^2} \\ \label{3.22a}
&\leq& \vep+\de.
\en
Similarly, we can also deduce that both $\|g^{\vep,\de}_{y,\al}\|_{[L^2(\G_{a,b})]^2}\to\infty$ and $\|v_{g^{\vep,\de}_{y,\al}}\|_{[H^1(\Om_R\se\ov{\Om})]^2}\to\infty$ as $\de\to 0$.
The proof is thus complete.
\end{proof}

The above Theorem \ref{thm3.3} inspires us to define the following indicator function
\be\label{3.23a}
{\rm Ind}(z):=1/\|g_z\|_{[L^2(\G_{a,b})]^2 }
\en
by the solution $g_z$ of inequality (\ref{3.13a}) and (\ref{3.14a}), which has distinct behaviors for $z\in \Om_R\setminus\ov{\Om}$ and $z\in \Om$.
Based on this observation we can propose the following algorithm by ${\rm Ind}(z)$ to reconstruct the rough surface $\G$.
		
\begin{algorithm}\caption{Reconstruction of Rough Surfaces Based on Modified Near-field Equation }\label{al1}
\begin{itemize}
\item 1. Select  a sampling region $S$ containing the desired part of the rough surface $\G$.
				
\item 2. Choose a sufficiently large $R>0$ to numerically compute $G^s(x,z,p;R)$ for $x\in \G_{a,b}$.
				
\item 3.  Solve the modified near-field equation  (\ref{3.1a}) numerically to get $g_z$ for each sampling points $z\in S$.
				
\item 4. Compute the indicator function ${\rm Ind}(z)$ and choose a cut-off value $C>0$ to plot  $\Om_R\se\ov{\Om}$ if and only if ${\rm Ind}(z)\leq C$.
\end{itemize} 
\end{algorithm}
		
\begin{remark}\label{rm3.5} 
{\rm In general, it is impossible to numerically recover the entire rough surface by limited measurements in a finite subdomain above the surface. Moreover, by Theorem \ref{thm3.3}, 
it can be seen that Algorithm $1$ always works for recovering the rough surface restricted into a finite interval $[-R,R]$ for each sufficiently large $R>0$. Therefore, 
we can always choose an appropriate $R>0$ in practical applications so that the reconstructed part of the surface we want can be covered by $[-R,R]$.

}
\end{remark}
		
\section{Numerical experiments}\label{sec4}
\setcounter{equation}{0}

In this section, several numerical examples are presented  to demonstrate the effectiveness of the reduced Algorithm 1 by take elastic wave measurements on $\G_{a,b}$. Notice that Algorithm 1
does not work if the measurement data $G^{sc}(\cdot,z,p;R)|_{\G_{a,b}}$ cannot be provided for the modified near-field operator ${\bf N}_{\rm Mod}$ with a chosen $R>0$. However,
similar to the acoustical case \cite{MJKJ17}, it could be deduced that $G^{sc}(\cdot,z,p;R)$ decays to $0$ in any fixed bounded domain in $\Om$ as $R\to\infty$ under the a priori assumption 
on $\G$ which lies in an unbounded strip domain
with a finite height in $x_2$-direction. The following numerical experiments will illustrate this fact. For the sake of simplicity, we assume in numerical experiments that the rough surface $\G$ is flat 
for sufficiently large $|x_1|$, and the special surface $\wid{\G}_R$ is a local fanshaped perturbation of the plane $x_2=0.5$ with center $(0,\sqrt{3}R/2)$, center angle $2\pi/3$ and radius $R$.
We present numerical results of $G^{sc}(\cdot,z,p;R)$ to Problem \eqref{2.11a}  in Table 1 at several discretized points $x^1=(-1,1)$, $x^2=(0,1)$ and $x^3=(1,1)$ with the location 
$z=(-1,1)$ and the polarization $p=(0,1)$, where the lam{\'e} constants and frequency are chosen as $\mu=3$, $\lambda=9$ and $\om=1$ and the parameter $R=10^t$ with $t=2,3,4,5,6$.		

\begin{tablehere}
\begin{center}
\begin{tabular}{ l     l     l}
\hline
&$R$        &\qquad\;\;$G^{ sc}(x^1,z,p;R)$  \qquad\qquad\quad\; $G^{sc}(x^2,z,p;R)$ \qquad\qquad\quad\;\;$G^{sc}(x^3,z,p;R)$ \\
\hline
&$10^2$   &\begin{tabular} {l}
		1.0e-03$\cdot$(-0.5641+0.6026{\rm i})\\
		1.0e-03$\cdot$(-0.1707+0.2210{\rm i}) 
	\end{tabular} 
	\begin{tabular} {l}
		1.0e-03$\cdot$(0.2350-0.2050{\rm i})\\
	1.0e-03$\cdot$(1.6508-3.1582{\rm i}) 
	\end{tabular}\;\;\,
	\begin{tabular} {l}
		1.0e-03$\cdot$(2.0287-3.2856{\rm i})\\
		1.0e-03$\cdot$(2.1780-3.3085{\rm i}) 
	\end{tabular}\\ \hline
&$10^3$   &\begin{tabular} {l}
		1.0e-03$\cdot$(-0.0161+0.0329{\rm i}) \\
		1.0e-03$\cdot$(-0.0102+0.0094{\rm i})  
	\end{tabular} 
	\begin{tabular} {l}
		1.0e-03$\cdot$(-0.0043-0.0142{\rm i})  \\
		1.0e-03$\cdot$(0.0439-1.5410{\rm i})
	\end{tabular}\;\,
	\begin{tabular} {l}
		1.0e-03$\cdot$(0.0452-1.5421{\rm i})\\ 
		1.0e-03$\cdot$(0.0461-1.5425{\rm i}) 
	\end{tabular}\\ 
\hline
%
%
&$10^4$   &\begin{tabular} {l}
		1.0e-04$\cdot$(-0.0019-0.0014{\rm i})\\
		1.0e-04$\cdot$(-0.0012-0.0007{\rm i}) 
	\end{tabular} \,
	\begin{tabular} {l}
		1.0e-04$\cdot$(-0.0005+0.0000{\rm i})\\
		1.0e-04$\cdot$(-1.9948+0.1195{\rm i}) 
	\end{tabular}
	\begin{tabular} {l}
		1.0e-04$\cdot$(-1.9948+0.1196{\rm i})\\
		1.0e-04$\cdot$(-1.9949+0.1195{\rm i}) 
	\end{tabular}\\ 
\hline
%
%
&$10^5$   &\begin{tabular} {l}
		1.0e-05$\cdot$(-0.0002+0.0001{\rm i})\\
		1.0e-05$\cdot$(-0.0001+0.0002{\rm i}) 
	\end{tabular} 
	\begin{tabular} {l}
		1.0e-05$\cdot$(0.0000+0.0002{\rm i})\\
		1.0e-05$\cdot$(1.5585-1.2522{\rm i}) 
	\end{tabular}\;\,
	\begin{tabular} {l}
		1.0e-05$\cdot$(1.5586-1.2522{\rm i})\\
		1.0e-05$\cdot$(1.5586-1.2522{\rm i}) 
	\end{tabular}\\ 
\hline
%
%
&$10^6$   &\begin{tabular} {l}
		1.0e-07$\cdot$(-0.0001-0.0002{\rm i})\\
		1.0e-07$\cdot$(0.0001--0.0002{\rm i}) 
	\end{tabular} \,
	\begin{tabular} {l}
		1.0e-07$\cdot$(0.0002-0.0002{\rm i})\\
		1.0e-07$\cdot$(-2.3146+4.2823{\rm i}) 
	\end{tabular}
	\begin{tabular} {l}
		1.0e-07$\cdot$(-2.3146+4.2823{\rm i})\\
		1.0e-07$\cdot$(-2.3146+4.2823{\rm i}) 
	\end{tabular}\\ 
\hline
\end{tabular}
\caption{\label{us_num} Numerical solutions of $G^{ sc}(x,z,p;R)$ with different $R$.}
\end{center}
\end{tablehere}
From Table 1, it is seen that for each $J_0>0$, we can always choose sufficiently large $R>0$ such that $\|{\bf N}-{\bf N}_{\rm Mod}\|_{[L^2(\G_{a,b})]^2\to [L^2(\G_{a,b})]^2} \leq C10^{-J_0}$,
where  the operator ${\bf N}: [L^2(\G_{a,b})]^2\to [L^2(\G_{a,b})]^2 $ is defined by
\ben
({\bf N}g)(x)=\int_{\G_{a,b}}u^{sc}(x,y,g(y);R){\,\rm d}s(y) \qquad {\rm for\;} \  x\in\G_{a,b}.
\enn
Thus, the sampling method based on the modified near-field equation \eqref{3.1a} could be reduced to numerically solve the following equation
\be\label{4.1aa}
{\bf N}g_y = G^{in}(\cdot,y,p)|_{\G_{a,b}},
\en
in the sense e.g., $J_0=100$.

In view of the analyticity of the kernel of the operator ${\bf N}$, equation \eqref{4.1aa} is severally ill-posed. We then consider its regularized equation
\be\label{4.1a}
\al g_z^\al+{\bf N}^*{\bf N}g_z^\al = {\bf N}^*(G^{in}(\cdot,z,p)|_{\G_{a,b}}),
\en
where $\al>0$ is a regularization parameter chosen by the Morozov discrepancy principle. 
Therefore, the proposed Algorithm 1 can be reduced to the following algorithm 2 in numerical experiments.
\begin{algorithm}\caption{Reconstruction of Rough Surfaces Based on Near-field Equation }\label{al2}
\begin{itemize}
\item 1. Select  a sampling region $S$ containing the desired part of the rough surface $\G$.

\item 2. Compute the scattered solution $u^{sc}(x,y,p)$ for $x,y\in\G_{a,b}$.
				
\item 3.  Solve the regularized equation $\al g_z^\al+{\bf N}^*{\bf N}g_z^\al = {\bf N}^*(G^{in}(\cdot,z,p)|_{\G_{a,b}}$ to get $g_z$ for each sampling points $z\in S$.
				
\item 4. Plot the indicator function ${\rm Ind}(z) = 1/\|g_z\|_{L^2}$ to image the domain $\Om_R\se\ov{\Om}$ by choosing a cut-off value $C>0$.
\end{itemize} 
\end{algorithm}

In numerical experiments, the exact measurement data $u^{sc}$ is obtained by the Nystr\"{o}m method (cf.\cite{ATSNA00}). Let  $x_k$, $k=1,2,\cdots,N$, denote $N$ measuring points on $\G_{a,b}$ and $x_{sl}$, $l=1,2,\cdots,N$, denote $N$ incident point sources located at 
$\G_{a,b}$. Then the near-field operator ${\bf N}$ is discretized into a $(2N\ti 2N)$-dimensional matrix 
\be\label{4.2a}
({\bf N})_{2N\ti 2N} = (u^{sc}(x_k,x_{sl}) )_{1\leq k,l\leq N}
\en
with $u^{sc}(x_k,x_{sl})\in\mat{C}^{2\ti 2}$ defined by $u^{sc}(x_k,x_{sl})e_j:=u^{sc}(x_k,x_{sl},e_j)$, and the incident source $G^{in}$ is discretized into a $2N$-dimensional vector
\be\label{4.3a}
(G^{in}(x_k,z,p;R))_{1\leq k\leq N}.
\en
Thus, we can define the following discretized form 
\be\label{4.4a}
{\rm Ind}_N:\;z\mapsto 1/\|\hat{g}_z\|_{\ell^2}: = 1\left/\left[\sum_{j=1}^{2N}|\hat{g}_{z,j}|^2\right]^{\frac{1}{2}}\right.
\en
of the indicator function ${\rm Ind}(z)$ by solving numerically the equation \eqref{4.1a}, 
which behaves differently inside and outside the domain $\Om_R\setminus\ov{\Om}$. To deal with different examples in the same manner, we introduce another new indicator function		
\be\label{4.5a}
{\rm NInd}(z): ={\rm Ind}_N(z)/\max\limits_{z\in S} {\rm Ind}_N(z)
\en
which will be used to image the rough surface in all numerical examples. Moreover, we also examine our sampling algorithm with the different noisy level, where 
the noisy data is considered to be a perturbation of the exact data in the form 
\be\label{4.6a}
(({\bf N})_{2N\times 2N})_\delta: = ({\bf N})_{2N\times 2N} + \delta \frac{\bf X}{\|{\bf X}\|_{2}} \|({\bf N})_{2N\times 2N} \|_{2}.
\en
Here, ${\bf X}$ is a complex-valued noisy matrix containing random numbers that are uniformly distributed in the complex square $\{c_1+{\rm i}c_2: |c_1|\leq 1, \;|c_2|\leq 1 \}\subset \C$. 
In this case, the corresponding indicator function is denoted by $({\rm NInd})_{\de}(z)$.		

In the following numerical examples, we set the grid $S=(-5,-5)\ti(0,0.9)$, and the step size in $x_1$-axis and $x_2$-axis are $0.1$ and $0.09$, respectively. Besides, we set the lam{\'e} constants $\mu=3$ and $\la=9$, the frequency $\om=1$, the measurement width $a=10$, the measurement height $b=1$ and the number of the measurement points $N=401$ which are located equidistantly in $\G_{a,b}$. 
Moreover, numerous numerical examples we carry out indicate that it is enough for us to take the regularization parameter $\al$ as a fixed constant. Here, we choose $\al$ to be $10^{-3}$ in (\ref{4.1a}). 
		
{\bf Example 1.} In this example, the rough surface $\G$ is given by  
\ben
f(x_1)=
\begin{cases}
	0.5+0.8e^{\frac{16 }{x_1^2-16}}\quad &{\rm for }\; |x_1|<4,\\
	0.5                           \quad &{\rm for }\; |x_1|\geq 4
\end{cases}
\enn 
which is a local perturbation of the plane surface $x_2=0.5$ with a small rate of change. The reconstructions are presented in Figure 2 with no noise $(a)$, $2\%$ noise $(b)$ and
$5\%$ noise, respectively.


\begin{figurehere}
\vskip 0.5truecm
\hfill{}\includegraphics[clip,width=0.31\textwidth]{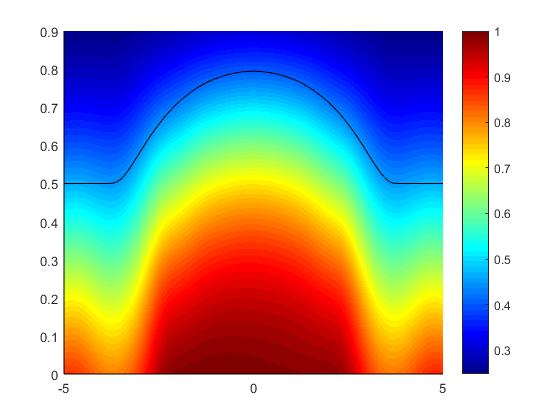}\hfill{}
	\hfill{}\includegraphics[clip,width=0.31\textwidth]{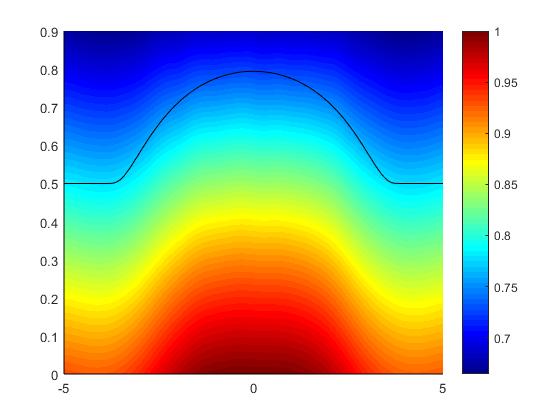}\hfill{}
	\hfill{}\includegraphics[clip,width=0.31\textwidth]{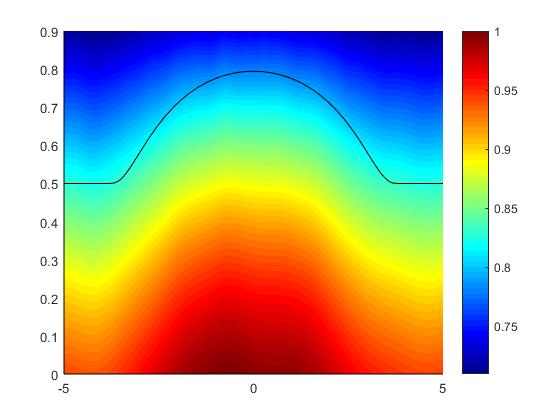}\hfill{}
	
\hfill{} (a) No noise \hfill{}\hfill{}(b) 2$\%$ noise  \hfill{}\hfill{}(c) 5$\%$ noise\hfill{}
\vskip 0.5truecm
	
\caption{\label{Figure2}  \small{Reconstruction of the local rough surface $\G$ given in Example 1 from data with  no noise (a), 2\% noise (b) and 5\% noise by the indicator function (\ref{4.5a}).}}
\end{figurehere}

{\bf Example 2.} In this example, the rough surface $\G$ is described by $f(x_1)=0.5+\Om_3(x_1)$, 
where $\Om_3(\cdot)$ is a cubic B-spline function which is twice differentiable with compactly support in $\mat{R}$ and is given by 
\ben
\Om_3(x_1)=
\begin{cases}
	\frac{1}{2}|x_1|^3-x_1^2+\frac{2}{3}                  \quad &{\rm for }\;|x_1|\leq 1,\\
	-\frac{1}{6}|x_1|^3+x_1^2-2|x_1|+\frac{4}{3 }  \quad &{\rm for }\; 1<|x_1|\leq 2,\\
	0                                          \quad &{\rm for }\; |x_1|\geq 2.\\
\end{cases}
\enn
Compared to Example 1, $\G$ is considered in this case to remain a local perturbation of the plane surface $x_2=0.5$, but with a large rate of change; see the reconstructions
in Figure 3 with no noise $(a)$, $2\%$ noise $(b)$ and $5\%$ noise, respectively.

\begin{figurehere}
\vskip 0.5truecm
\hfill{}\includegraphics[clip,width=0.31\textwidth]{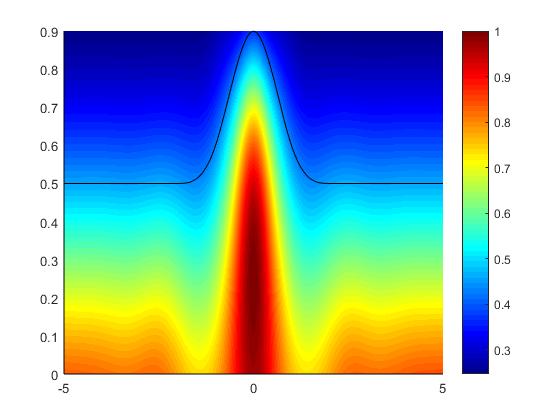}\hfill{}
	\hfill{}\includegraphics[clip,width=0.31\textwidth]{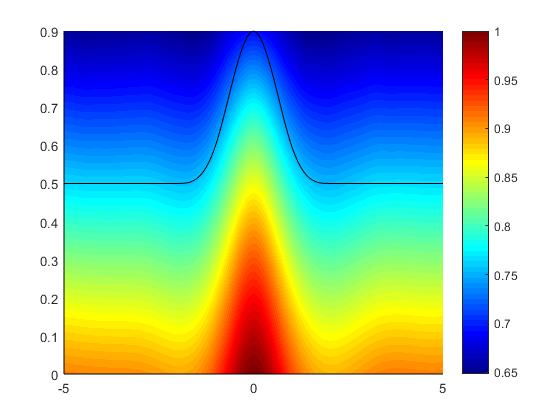}\hfill{}
	\hfill{}\includegraphics[clip,width=0.31\textwidth]{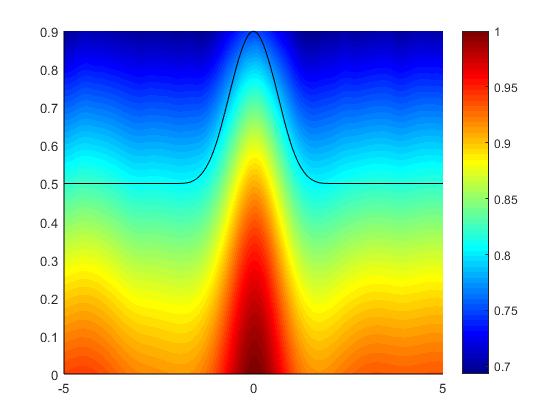}\hfill{}

\hfill{} (a) No noise \hfill{}\hfill{}(b) 2$\%$ noise  \hfill{}\hfill{}(c) 5$\%$ noise\hfill{}
\vskip 0.5truecm
	
\caption{\label{Figure3}  \small{Reconstruction of the local rough surface $\G$ given in Example 2 from data with no noise (a), 2\% noise (b) and 5\% noise (c) by the indicator function ( \ref{4.5a}).}}
\end{figurehere}

{\bf Example 3.} In this example, $\G$ is considered to be a nonlocal rough surface described by a periodic function in the $x_1$-direction: $f(x_1)=0.5+0.15\sin(x_1)$ for $x_1\in\R$.
Numerical results are presented in Figure 4 with no noise $(a)$, $2\%$ noise $(b)$ and $5\%$ noise, respectively.
\begin{figurehere}
\vskip 0.5truecm
\hfill{}\includegraphics[clip,width=0.31\textwidth]{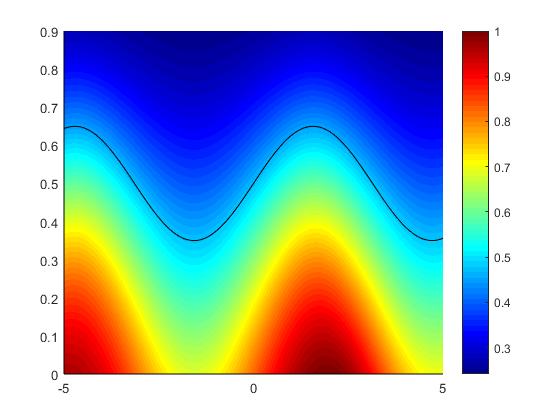}\hfill{}
	\hfill{}\includegraphics[clip,width=0.31\textwidth]{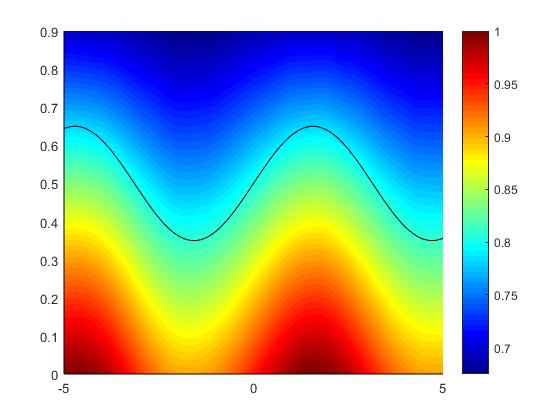}\hfill{}
	\hfill{}\includegraphics[clip,width=0.31\textwidth]{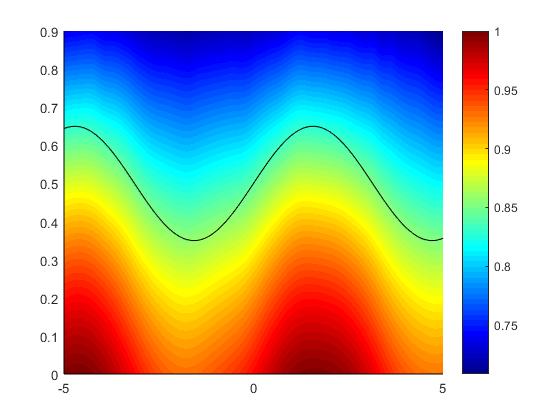}\hfill{}
	
\hfill{} (a) No noise \hfill{}\hfill{}(b) 2$\%$ noise  \hfill{}\hfill{}(c) 5$\%$ noise\hfill{}
\vskip 0.5truecm
	
\caption{\label{Figure4}  \small{Reconstruction of the periodic surface $\G$ given in Example 3 from data with no noise (a), 2\% noise (b) and 5\% noise (c) by the indicator function (\ref{4.5a}).}}
\end{figurehere}

{\bf Example 4.} In this example,  $\G$ is a nonlocal rough surface described by $f(x_1)=0.5+0.18\sin(x_1)+0.15\cos (\frac{x_1}{2})$ for $x_1\in\R$. Compared to Example 3, $\G$ 
is considered in this case to be a periodic surface in the $x_1$-direction any more. Numerical results are presented in Figure 5 with no noise $(a)$, $2\%$ noise $(b)$ and $5\%$ noise, respectively.
\begin{figurehere}
	\vskip 0.5truecm
	\hfill{}\includegraphics[clip,width=0.31\textwidth]{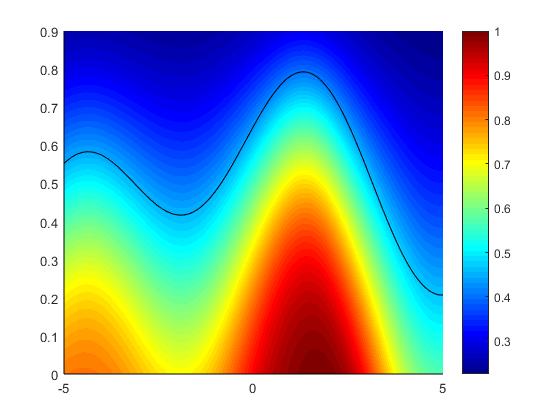}\hfill{}
		\hfill{}\includegraphics[clip,width=0.31\textwidth]{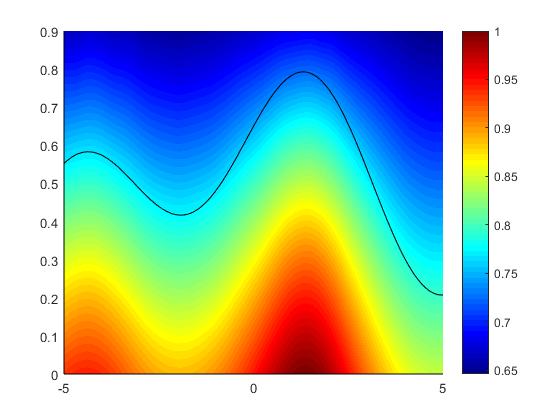}\hfill{}
		\hfill{}\includegraphics[clip,width=0.31\textwidth]{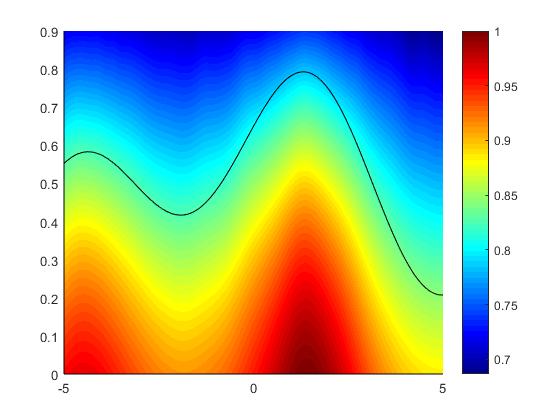}\hfill{}
		
\hfill{} (a) No noise \hfill{}\hfill{}(b) 2$\%$ noise   \hfill{}\hfill{}(c) 5$\%$ noise\hfill{}
	\vskip 0.5truecm
	
	\caption{\label{Figure5}  \small{Reconstruction of the nonlocal rough surface $\G$ given in Example 4 from data with no noise (a), 2\% noise  (b) and 5\% noise (c) by the indicator function (\ref{4.5a}).}}
\end{figurehere}

From the above numerical examples Figure 2-5, it is observed that the sampling method proposed in Theorem \ref{thm3.3} can give a satisfactory construction for the rough surface $\G$ with various 
geometric features at different noise levels. Moreover, it is also observed that the quality of the reconstruction for $\G$ depends on the amount of measurement data, which shows that 
$\G$ can be reconstructed in numerics more fully as long as more enough data are measured on the line $x_2=b$.


\section{Conclusions}

In this paper, we proposed an extended sampling method to recover an infinite rough surface by near-field elastic measurements. The idea is mainly based on constructing a modified near-field equation 
by transforming the original scattering problem into an equivalent boundary value problem with the boundary data of compact support. Numerical results demonstrate that 
the reduced inversion algorithm can work well for imaging a variety of rough surfaces to capture the essential features using both exact and noise data. Moreover, it is also shown that the more rough surface can be reconstructed if the measurement width $a$ becomes larger.
Further, the reconstruction can also be regarded as a good initial guess for an iterative type method in order to obtain an accurate numerical reconstruction of the surface.


\section*{Acknowledgements}

The work is partially supported by the NNSF of China Grant No. 11771349. The authors thank Dr. Jianliang Li for his discussion for numerically solving the elastic wave scattering problem 



\begin{thebibliography}{99}

\bibitem{T01a} T. Arens. Uniqueness for elastic wave scattering by rough surfaces. {\em SIAM Journal on Mathematical Analysis}, 33(2): 461--476, 2001.

\bibitem{T02}T. Arens. Existence of solution in elastic wave scattering by unbounded rough surfaces. {\em Mathematical Methods in the Applied Sciences}, 25(6):507--528, 2002.


\bibitem{JG12} J. Elschner and G. Hu. Elastic scattering by unbounded rough surfaces. {\em SIAM Journal on Mathematical Analysiss}, 44(6):4101--4127, 2012.

\bibitem{JG15}J. Elschner and G. Hu. Elastic scattering by unbounded rough surfaces: Solvability in weighted sobolev spaces. {\em Applicable Analysis}, 94(2):251--278, 2015.

\bibitem{Li2015A} J. Li and G. Sun. A nonlinear integral equation method for the inverse scattering problem by sound-soft rough surfaces. {\em Inverse Problems in Science $\&$ Engineering}, 23(4):557--577, 2015.

\bibitem{liu2018a} X. Liu, B. Zhang, and H.Zhang. A direct imaging method for inverse scattering by unbounded rough
surfaces. {\em SIAM Journal on Imaging Sciences}, 11(2):1629--1650, 2018.

\bibitem{GP13} G. Bao and P. Li. Near-field imaging of infinite rough surfaces. {\em SIAM Journal on Applied Mathematics}, 73(6):2162--2187, 2013.

\bibitem{lechleiter2008factorization} A. Lechleiter. {\em Factorization Methods for Photonics and Rough Surfaces}. 2008.

\bibitem{lines2005time} C.D. Lines and S.N. Chandler-Wilde. A time domain point source method for inverse scattering by rough
surfaces. {\em Computing}, 75(2-3):157--180, 2005.

\bibitem{Burkard2017A} C. Burkard and R. Potthast. A time-domain probe method for three-dimensional rough surface
reconstructions. {\em Inverse Problems $\&$ Imaging}, 3(2):259--274, 2017.

\bibitem{Zhang2013A} H. Zhang and B. Zhang. A novel integral equation for scattering by locally rough surfaces
and application to the inverse problem. {\em SIAM Journal on Applied Mathematics}, 73(5):1811--1829, 2013.

\bibitem{li2017kirsch}J. Li, G. Sun, and B. Zhang. The kirsch-kress method for inverse scattering by infinite locally
rough interfaces. {\em Applicable Analysis}, 96(1):85--107, 2017.

\bibitem{MJKJ17}M. Ding, J. Li, K. Liu, and J. Yang. Imaging of local rough surfaces by the linear sampling method with
near-field data. {\em SIAM Journal on Imaging Sciences}, 10(3):1579--1602, 2017.

\bibitem{XBH19}X. Liu, B. Zhang, and H. Zhang.  Near-field imaging of an unbounded elastic rough surface with a
direct imaging method. {\em SIAM Journal on Applied Mathematics}, 79(1):153--176, 2019.

\bibitem{hu2019a}G. Hu, X. Liu, B. Zhang, and H. Zhang. A non-iterative approach to inverse elastic scattering by unbounded
rigid rough surfaces. {\em Inverse Problems}, 35(2):025007, 2019.

\bibitem{cakoni2011linear} F. Cakoni, D. Colton, and P. Monk.{\em The linear sampling method in inverse electromagnetic
	scattering}. SIAM, 2011.

\bibitem{FD06} F. Cakoni and D. Colton. {\em Qualitative Methods in Inverse Scattering Theory}. Springer, Berlin, 2006.

\bibitem{ATSNA00} A. Meier, T. Arens, S.N. Chandler-Wilde, and A. Kirsch. A nystr\"{o}m method for a class of integral equations on the real line with applications to scattering by diffraction gratings and rough surfaces. {\em Journal of Integral Equations and Applications}, 12(3):281--321, 2000.










\end{thebibliography}
\end{document}